\newlength{\myarrowsize} 
\newenvironment{diagram}[2]{%
\begin{equation}%
\begin{tikzpicture}[>=cmto,baseline=(current bounding box.center),%
	to/.style={-cmto,font=\scriptsize,cap=round},%
	into/.style={cmhook->,font=\scriptsize,cap=round},%
	onto/.style={-cmonto,font=\scriptsize,cap=round},%
	math/.style={matrix of math nodes, row sep=#2, column sep=#1,%
		text height=1.5ex, text depth=0.25ex}]%
}{%
\end{tikzpicture}%
\end{equation}%
\ignorespacesafterend%
}
\newenvironment{diagram*}[2]{%
\[%
\begin{tikzpicture}[>=cmto,baseline=(current bounding box.center),%
	to/.style={->,font=\scriptsize,cap=round},%
	into/.style={cmhook->,font=\scriptsize,cap=round},%
	onto/.style={-cmonto,font=\scriptsize,cap=round},%
	math/.style={matrix of math nodes, row sep=#2, column sep=#1,%
		text height=1.5ex, text depth=0.25ex}]%
}{%
\end{tikzpicture}%
\]%
\ignorespacesafterend%
}
\newcommand{\decal}[1]{\lbrack #1 \rbrack}
\newcommand{\ltriangle}[4][]%
{\begin{diagram}[#1]%
	{#2} &\rTo& {#3} &\rTo& {#4} &\rTo& {#2 \decal{1}}%
\end{diagram}}
\newcommand{\ZZ}{\mathbb{Z}}
\newcommand{\QQ}{\mathbb{Q}}
\newcommand{\RR}{\mathbb{R}}
\newcommand{\CC}{\mathbb{C}}
\DeclareMathOperator{\id}{id}
\def\overbar#1#2#3{{%
	\setbox0=\hbox{\displaystyle{#1}}%
	\dimen0=\wd0
	\advance\dimen0 by -#2 
	\vbox {\nointerlineskip \moveright #3 \vbox{\hrule height 0.3pt width \dimen0}%
		\nointerlineskip \vskip 1.5pt \box0}%
}}
\let\@@seccntformat\@seccntformat
\renewcommand*{\@seccntformat}[1]{%
  \expandafter\ifx\csname @seccntformat@#1\endcsname\relax
    \expandafter\@@seccntformat
  \else
    \expandafter
      \csname @seccntformat@#1\expandafter\endcsname
  \fi
    {#1}%
}
\newcommand*{\@seccntformat@subsection}[1]{%
  \textbf{\csname the#1\endcsname.}
}
\let\@paragraph\paragraph
\renewcommand*{\paragraph}[1]{%
	\vspace{0.3\baselineskip}%
	\@paragraph{\textit{#1}}%
}
\newtheorem{theorem}[equation]{Theorem}
\newtheorem*{theorem*}{Theorem}
\newtheorem{lemma}[equation]{Lemma}
\newtheorem*{lemma*}{Lemma}
\newtheorem{corollary}[equation]{Corollary}
\newtheorem{proposition}[equation]{Proposition}
\newtheorem*{proposition*}{Proposition}
\theoremstyle{definition}
\newtheorem*{definition*}{Definition}
\theoremstyle{remark}
\newtheorem*{remark}{Remark}
\newtheorem*{question}{Question}
\newtheorem*{example*}{Example}
\theoremstyle{plain}
\newcommand{\theoremref}[1]{\hyperref[#1]{Theorem~\ref*{#1}}}
\newcommand{\lemmaref}[1]{\hyperref[#1]{Lemma~\ref*{#1}}}
\newcommand{\propositionref}[1]{\hyperref[#1]{Proposition~\ref*{#1}}}
\newcommand{\conjectureref}[1]{\hyperref[#1]{Conjecture~\ref*{#1}}}
\newcommand{\corollaryref}[1]{\hyperref[#1]{Corollary~\ref*{#1}}}
\let\old@caption\caption
\renewcommand*{\caption}[1]{%
	\setcounter{figure}{\value{equation}}%
	\stepcounter{equation}%
	\old@caption{#1}\relax%
}
\newcounter{thmA}
\newtheorem{theorem-intro}[thmA]{Theorem}
\newcounter{intro}
\newtheorem{intro-conjecture}[intro]{Conjecture}
\newtheorem{intro-corollary}[intro]{Corollary}
\newtheorem{intro-theorem}[intro]{Theorem}
\newtheorem{intro-problem}[intro]{Problem}
\DeclareMathOperator{\Alb}{Alb}
\newcommand{\R}{\mathbf R}
\newcommand{\D}{\mathbf D}
\newcommand{\EE}{\mathcal{E}}
\newcommand{\FF}{\mathcal{F}}
\def\D{\mathbf{D}}
\def\L{\mathbf{L}}
\def\RR{\mathbf{R}}
\def\X{\mathcal{X}}
\def\Y{\mathcal{Y}}
\def\Z{\mathcal{Z}}
\def\Pic{{\rm Pic}}
\def\Aut{{\rm Aut}}
\def\I{\mathcal{I}}
\def\Aff{{\rm Aff}}
\def\G{\mathcal{G}}
\def\H{\mathcal{H}}
\def\OO{{\mathcal O}}
\def\id{{\rm id}}
\def\E{\mathcal{E}}
\def\F{\mathcal{F}}
\def\P{\mathcal{P}}
\newcommand{\newpar}[1]{\subsection{\texorpdfstring{}{}}}
\newcommand{\parref}[1]{\hyperref[#1]{\S\ref*{#1}}}
\begin{document}

\title{Derived 
equivalences of smooth stacks and orbifold Hodge numbers}

\author[M.~Popa]{Mihnea Popa}
\address{Department of Mathematics, University of Illinois at Chicago,
851 S. Morgan Street, Chicago, IL 60607, USA } 
\email{{\tt mpopa@uic.edu}}
\thanks{The author was partially supported by the NSF grant DMS-1101323.}

\setlength{\parskip}{.04 in}

\dedicatory{Dedicated to Professor Kawamata on the occasion of his $60$th birthday, with great admiration.}

\maketitle

\section*{Introduction}

An important step in the development of the parallelism between derived equivalences and the minimal model program,  
as emphasized especially in the work of Kawamata (see \cite{kawamata4} for a survey), is to extend  results about smooth projective Fourier-Mukai partners to the singular case. While in general there are foundational issues still to be resolved, good progress has been made in the case of varieties with quotient singularities $X$, where the natural object to consider is the bounded derived category of coherent sheaves $\D (\X) := \D^b ({\rm Coh} (\X))$ on the associated canonical smooth Deligne-Mumford stack $\X$ \cite{BKR}, \cite{kawamata1}, \cite{kawamata3}. The main result of this paper is an addition in this direction, regarding the behavior of the orbifold cohomology and Picard variety.

\begin{intro-theorem}\label{general}
Let $X$ and $Y$ be normal projective varieties of dimension $n$, with quotient singularities, 
and let $\X$ and $\Y$ be the associated canonical smooth Deligne-Mumford 
stacks (or orbifolds). Assume that $\D(\X) \simeq \D(\Y)$. Then

\noindent 
(i) For every $- n \le i \le n$, one has an isomorphism
$$\underset{p-q= i}{\bigoplus} H^{p,q}_{\rm orb}(\X) \simeq \underset{p-q= i}{\bigoplus} H^{p,q}_{\rm orb} (\Y),$$
where $H^{p,q}_{\rm orb}(\X)$ are the orbifold Dolbeaut cohomology groups (see \S2.1).

\noindent
(ii)  $\Pic^0 (\X)$ and $\Pic^0 (\Y)$ are isogenous; in particular $h^{0,1}_{\rm orb} (\X) =  h^{0,1}_{\rm orb} (\Y)$.
\end{intro-theorem}

This is inspired by similar results in the smooth setting, where: (i) is a consequence of the homological Fourier-Mukai transform
and the Grothendieck-Riemann-Roch theorem \cite{orlov} Ch.2 (see also \cite{huybrechts} \S5.2), or of the invariance of Hochschild 
homology \cite{caldararu1} \S8, \cite{orlov} Ch.2; (ii) is the main result of \cite{PS}, relying on Rouquier's study of the connected 
component of the group of derived autoequivalences, and the study of actions of non-affine algebraic groups.
It makes progress towards the following:

\begin{intro-problem}\label{hodge_invar}
Let $X$ and $Y$ be normal projective varieties with quotient singularities, and let $\X$ and $\Y$ be the associated smooth 
Deligne-Mumford stacks. Assume that $\D(\X) \simeq \D(\Y)$. Given $p, q \in \QQ$, does one have 
$h^{p,q}_{\rm orb} (\X) =  h^{p,q}_{\rm orb} (\Y)$?
\end{intro-problem}

One reason this problem is more subtle than in the smooth projective case is that when the singularities of $X$ and $Y$ are not Gorenstein, orbifold Hodge numbers indexed by \emph{rational} numbers $p, q$, as opposed to integers, are guaranteed to enter the picture (see \S2.1). Thus even in low dimension Theorem \ref{general} potentially does not provide enough relations to solve for all orbifold Hodge numbers; in general we can only deduce  the following about individual ones:

\begin{intro-corollary}
Let $X$ and $Y$ be normal projective varieties with quotient singularities, and let $\X$ and $\Y$ be the associated smooth Deligne-Mumford stacks. Assume that $\D(\X) \simeq \D(\Y)$. Then
$$h^{n,0}_{\rm orb} (\X) =  h^{n,0}_{\rm orb} (\Y), \,\,\,\, h^{n-1,0}_{\rm orb} (\X) =  h^{n-1,0}_{\rm orb} (\Y), \,\,\,\,{\rm and 
}\,\,\,\,h^{1,0}_{\rm orb} (\X) =  h^{1,0}_{\rm orb} (\Y).$$
In particular, if $X$ and $Y$ are of dimension up to three, then 
$$h^{p,0}_{\rm orb} (\X) =  h^{p,0}_{\rm orb} (\Y), \,\,\,\,{\rm for~all~} p.$$
\end{intro-corollary}

The last identity follows directly from Theorem \ref{general}, while the first two are explained in Corollary \ref{high}.
However, as in the smooth case \cite{PS}, the theorem does suffice in the Gorenstein case in dimension up to three.

\begin{intro-corollary}\label{Gorenstein}
Assume in addition that $X$ and $Y$ are Gorenstein of dimension up to three. Then 
$$h^{p,q}_{\rm orb} (\X) =  h^{p,q}_{\rm orb} (\Y), \,\,\,\,{\rm for~all~} p, q.$$
\end{intro-corollary}
\begin{proof}
We will note below that in the Gorenstein case we only have $p,q \in \ZZ$, and that (in general) Serre duality 
$h^{p,q}_{\rm orb} (\X) = h^{n - p,n - q}_{\rm orb} (\X)$ still holds.
Therefore the statement follows immediately using all of the numerical information given by Theorem \ref{general}, 
as in \cite{PS} Corollary C.
\end{proof}

It is explained in \S5 that this last corollary can also be deduced using the more sophisticated derived and cohomological McKay correspondence, combined with \cite{PS}. However, in the present context it is a consequence of general methods that do not rely on the existence of crepant resolutions; it just happens to be a case of Theorem \ref{general} where one can fully solve the system of equations in the Hodge numbers that it provides. In \S5 I also speculate on analogues of Problem \ref{hodge_invar} for arbitrary singularities in the minimal model program,  with the place of orbifold Hodge numbers being taken by the stringy Hodge numbers.

The structure of the proof of Theorem \ref{general} completely follows the approach in the smooth projective case, so from the strategy point of view it is fair to say that there are no truly new ideas here. There are however 
various technical difficulties related to both definitions and proofs in the orbifold case, whose solution is facilitated by
recent developments in the study of Deligne-Mumford stacks. Along the way,  in \S2 I provide a cohomological orbifold Fourier-Mukai transform using an 
orbifold Mukai vector inspired by the Grothendieck-Riemann-Roch theorem for quotient stacks; this leads to Theorem \ref{general}(i). 
In \S3 I present a proof of an orbifold analogue of Rouquier's theorem on the invariance of $\Aut^0 \times \Pic^0$, which is 
applied in \S4 in order to deduce Theorem \ref{general}(ii) along the lines of \cite{PS}.  

\noindent
{\bf Acknowledgements.} 
This paper could not exist without Professor Yujiro Kawamata's systematic study of derived categories associated to singular varieties, and their relationship with the minimal model program. I have also benefitted greatly from ideas in the joint work with C. Schnell \cite{PS}. I would like to thank them, as well as A. C\u ald\u araru, I. Coskun, D. Edidin, M. Olsson, T. Yasuda and E. Zaslow for answering my questions and for providing numerous references.

\section{Cohomological Fourier-Mukai transform for orbifolds}

\subsection{Orbifold cohomology of Deligne-Mumford stacks}
In this paper, a Deligne-Mumford stack is more precisely a separated Deligne-Mumford stack of finite type over $\CC$, with a scheme as coarse moduli space. If $\X$ is one such, we can consider the abelian category ${\rm Coh}(\X)$ of coherent sheaves on $\X$, and its bounded derived category $\D (\X)$. 

We will be mostly concerned with orbifolds. These can be defined using the standard orbifold terminology (cf. e.g. \cite{BG} Ch.4 or \cite{kawamata1} \S2), or equivalently in the language of Deligne-Mumford stacks (see e.g. \cite{MP} or \cite{kawamata3} \S4); namely an \emph{orbifold} is a smooth Deligne-Mumford stack $\X$ whose generic point has trivial automorphism group. We will always assume that the orbifold has no pseudo-reflections, i.e. no codimension one fixed 
loci for the elements of the local groups; the coarse moduli scheme of $\X$ is then a variety $X$ with quotient singularities, and $\X$ is determined by $X$. Note that conversely every variety with quotient singularities arises in this fashion, i.e. is the coarse moduli scheme of an orbifold without pseudo-reflections (see e.g. \cite{yasuda1} Lemma 4.29). In what follows we consider only projective orbifolds.


In the standard orbifold language, orbifold cohomology and orbifold Hodge numbers have been defined and studied in \cite{CR}. 
Here I follow the presentation of Yasuda \cite{yasuda1} \S3.4, \cite{yasuda2} \S4.3, who describes the algebraic theory in the context
of stacks. Given a smooth Deligne-Mumford stack $\X$ over $\CC$,  with projective coarse moduli scheme, one defines the 
\emph{orbifold cohomology groups} of $\X$ as
$$H^i_{{\rm orb}} (\X, \QQ) : = \bigoplus_{\Z \subset \I \X}
H^{i - 2a (\Z)} (Z, \QQ) \otimes \QQ \big(-a (\Z)\big), $$
where $\QQ \big(-a (\Z)\big)$  is a Tate twist.
Here $\I\X$ denotes the inertia stack of $\X$; when $\X$ is an orbifold, $\I\X$ is finite and 
unramified over $\X$, and as above it can be written 
as the disjoint union of closed stacks $\Z$ (which are themselves inertia stacks for various fixed
loci). We denote by $Z$ the coarse moduli space of each such component; since $\I \X$ is a smooth stack and the map
$Z \rightarrow X$ is quasi-finite, Z is again a variety and has quotient singularities. It is well known  that in this case the rational cohomology groups $H^k (Z ,\QQ)$ continue to carry pure Hodge structures, with Serre and Poincar\'e duality; 
see \cite{steenbrink} \S1.
The number $a(\Z)$ is the \emph{shift number} of the component $\Z$, defined as 
follows: assume that the generic point of $\Z$, which consists of a pair $(x, \alpha)$ such that $x$ is a closed point of $\X$ and $\alpha \in {\rm Aut} (x)$, is such that ${\rm ord}(\alpha) = l$. We have then that $\alpha$ acts on the tangent space $T_x \X$ suchthat in a suitable bases this action is given by  a diagonal matrix 
$${\rm diag} \big(\nu_l^{a_1}, \ldots, \nu_l^{a_n} \big),$$
where $\nu_l$ is an $l$-th root of unity, $1 \le a_i \le l$, and $n = {\rm dim}~\X$.
Then 
$$a(\Z) : = n - \frac{1}{n}\sum_{i=1}^n a_i.$$
It is a standard fact that this depends only on the component $\Z$.
The \emph{orbifold Hodge numbers} of $\X$ are the Hodge numbers associated to the Hodge structure 
$H^i_{{\rm orb}} (\X, \QQ)$. Note that this is a pure Hodge structure of weight i; see \cite{yasuda2} Lemma 76. We have  
\begin{equation}\label{orbifold_hodge_def}
h^{p,q}_{{\rm orb}}(\X) = \sum_{\Z \subset \I \X} h^{p - a(\Z), q- a(\Z)} (Z).
\end{equation}
Some remarks are in order: all $a(\Z)$ are integers precisely when $X$ has Gorenstein quotient singularities. This is well known to correspond to the case when the matrix ${\rm diag} \big(\nu_l^{a_1}, \ldots, \nu_l^{a_n} \big)$ is in ${\rm SL}_n (\CC)$. In general however, in the above we are using the following conventions: 

\noindent
$\bullet$\,\, $H^i (Z, \QQ) = 0 \,\,\,\, {\rm if}\, i \notin \ZZ$.

\noindent
$\bullet$\,\,  $H^{p,q} (Z) = 0 \,\,\,\, {\rm if}\, p,q \notin \ZZ$.

\noindent
Thus in (\ref{orbifold_hodge_def}) the sum is taken over all $p, q \in \QQ$ such that $p - a (\Z), q - a(\Z) \in \ZZ$.

The orbifold Hodge numbers are easily identified in the special case of birationally invariant Hodge numbers.

\begin{lemma}\label{birat_hodge}
Let $X$ be a normal projective variety with quotient singularities, $\X$ the associated orbifold, and $\widetilde X$ 
a resolution of singularities of $X$.
Then
$$h^{0,q}_{{\rm orb}}(\X) = h^{0,q} (\widetilde X)\,\,\,\,{\rm for~all~} q.$$
\end{lemma}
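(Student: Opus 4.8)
The plan is to show that each side of the asserted equality equals $\dim H^q(X,\OO_X)$, where $X$ is the coarse moduli space of $\X$.

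First I would specialize the defining formula \eqref{orbifold_hodge_def} to $p=0$, obtaining
\[
h^{0,q}_{{\rm orb}}(\X)\;=\;\sum_{\Z\subset\I\X}h^{-a(\Z),\,q-a(\Z)}(Z),
\]
and argue that only the untwisted sector contributes. Each component $Z$ is a projective variety with quotient singularities, so $H^k(Z,\QQ)$ carries a pure Hodge structure of weight $k$ by \cite{steenbrink} \S1; hence $H^{p,q'}(Z)=0$ whenever $p<0$, and by our conventions also whenever $p\notin\ZZ$. On the other hand the shift number, which in view of the bounds $1\le a_i\le l$ is $a(\Z)=\sum_{i=1}^n\bigl(1-\tfrac{a_i}{l}\bigr)$, is always $\ge 0$, with equality precisely when every $a_i=l$, i.e.\ precisely when the generic automorphism $\alpha$ of $\Z$ acts trivially on $T_x\X$; since, by linearizability of finite group actions in characteristic $0$, the local group of an orbifold acts faithfully on the tangent space at a fixed point, this forces $\alpha=\id$, i.e.\ $\Z$ is the untwisted sector $\X\hookrightarrow\I\X$, for which $Z=X$ and $a(\Z)=0$. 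For every other component either $-a(\Z)<0$ or $-a(\Z)\notin\ZZ$, so its term vanishes, leaving $h^{0,q}_{{\rm orb}}(\X)=h^{0,q}(X)$, the $(0,q)$-Hodge number of the pure Hodge structure on $H^q(X,\QQ)$.

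Next I would identify $h^{0,q}(X)$ with $\dim H^q(X,\OO_X)$ via the coarse moduli morphism $p\colon\X\to X$: it induces an isomorphism $H^\bullet(X,\QQ)\xrightarrow{\ \sim\ }H^\bullet(\X,\QQ)$ of pure Hodge structures, Hodge symmetry on the smooth proper stack $\X$ gives $H^{0,q}(\X)\cong H^q(\X,\OO_\X)$, and $p_\ast$ is exact with $p_\ast\OO_\X=\OO_X$; hence $h^{0,q}(X)=\dim H^q(X,\OO_X)$. (Alternatively, quotient singularities are rational, hence Du Bois, so $\gr^0_F H^q(X,\CC)\cong H^q(X,\OO_X)$ directly.) Finally, for a resolution $\pi\colon\widetilde X\to X$, which we may take projective, rationality of the quotient singularities gives $\R\pi_\ast\OO_{\widetilde X}=\OO_X$, so the Leray spectral sequence yields $H^q(\widetilde X,\OO_{\widetilde X})\cong H^q(X,\OO_X)$, while Hodge symmetry on $\widetilde X$ gives $h^{0,q}(\widetilde X)=\dim H^q(\widetilde X,\OO_{\widetilde X})$. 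Chaining these identifications produces $h^{0,q}_{{\rm orb}}(\X)=\dim H^q(X,\OO_X)=h^{0,q}(\widetilde X)$.

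Almost everything here is standard singular Hodge theory; the only genuinely orbifold-specific ingredient is the shift-number bookkeeping in the first step. The one point requiring a little care is that $a(\Z)$ is in general not an integer, so one must observe that a twisted sector contributes $0$ both when $a(\Z)$ is a positive integer (negative Hodge index on $Z$) and when $a(\Z)\notin\ZZ$ (by the conventions $H^{p,q'}(Z)=0$ for $p\notin\ZZ$). I do not anticipate any obstacle beyond this.
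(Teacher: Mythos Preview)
Your proof is correct and follows essentially the same two-step approach as the paper: first observe that only the untwisted sector contributes to $h^{0,q}_{\mathrm{orb}}(\X)$ because $a(\Z)\ge 0$ with equality only for $\Z=\X$, then invoke the birational invariance of $h^{0,q}$ for varieties with quotient singularities. The paper's proof is a two-line sketch that simply cites \cite{steenbrink} for the second step (noting that the $H^{q,0}$ part of the pure Hodge structure on $H^q(X,\QQ)$ can be computed on any resolution), whereas you spell out an equivalent route via rationality of quotient singularities and $\RR\pi_\ast\OO_{\widetilde X}=\OO_X$; your version is more detailed but not genuinely different.
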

\begin{proof}
It is clear from (\ref{orbifold_hodge_def}) that the only contribution to $h^{0,q}_{{\rm orb}}(\X)$ can come from 
the ``untwisted sector", i.e. when $Z = X$, corresponding to the case $a (\Z) = 0$. But in the pure Hodge structure
on $H^q (X, \QQ)$ described in \cite{steenbrink}, the $H^{q, 0}$ part can be computed on any resolution of singularities.
\end{proof}

\subsection{Cohomological Fourier-Mukai transform}
According to Kawamata \cite{kawamata3} Theorem 1.1, Orlov's theorem continues to hold in this setting: if $\X$ and $\Y$ are the orbifolds associated to normal projective varieties with quotient singularities, and $\Phi$ is an equivalence between the triangulated categories $\D(\X)$ and $\D(\Y)$, then there exists a (unique up to isomorphism) object  $\E \in  \D(\X\times \Y)$ such that 
$$\Phi = \Phi_{\E}: \D (\X) \rightarrow \D(\Y),\,\,\,\, \Phi_{\E} (\cdot) = \RR{p_2}_* \big(p_1^*(\cdot)\overset{\L}{\otimes} 
\E\big),$$ 
the Fourier-Mukai transform induced by $\E$. We will see that, as in the setting of smooth projective varieties, this induces a cohomological Fourier-Mukai transform, this time at the level of orbifold cohomology.

\noindent
{\bf Assumption.} The first part of the discussion below works for arbitrary quotient stacks. As explained in \cite{edidin} \S3.2, this is a very mild restriction;  for instance, every Deligne-Mumford stack that has the resolution property is a quotient stack. This is certainly the case for orbifolds, according to \cite{totaro} Theorem 1.2 (see also \cite{kawamata3} Theorem 4.2).

We denote by $K_0 (\X)$ the Grothendieck group of vector bundles on $\X$, or equivalently of coherent sheaves as $\X$ has the resolution property.  As usual, if $\X$ is smooth, there exist a Chern character and a Todd class
$${\rm ch, ~Td}: K_0 (\X) \longrightarrow {\rm Ch}^* (\X) \otimes \QQ.$$
Note that over $\QQ$ we have isomorphisms 
$${\rm Ch}^* (\X) \otimes \QQ \simeq {\rm Ch}^* (X) \otimes \QQ \,\,\,\,
{\rm and} \,\,\,\, H^* (\X, \QQ) \simeq H^* (X, \QQ),$$
where $X$ is the coarse moduli space of $\X$. In particular, via the cycle class map of $X$, we
can also consider the Chern character at the cohomology level:
$${\rm ch}: K_0 (\X) \longrightarrow H^* (\X, \QQ).$$
A detailed discussion of these constructions for Deligne-Mumford stacks can be found for instance 
in \cite{edidin} and the references therein.

Following \cite{toen} and \cite{edidin}, the Grothendieck-Riemann-Roch theorem for (quotient) Deligne-Mumford 
stacks is best expressed in terms of a Chern character which takes values in the complex 
cohomology of $\I\X$, i.e. in the orbifold cohomology of $\X$. More precisely, inspired by 
the localization theorem one produces a homomorphism
$$I{\tau_{\X}}: K_0(\X) \longrightarrow H^* ( \I \X, \CC),$$
which as explained in \cite{edidin} \S5 induces an isomorphism $K_0 (\X)_{\CC} \rightarrow {\rm Ch}^*(\I\X)\otimes \CC$, 
and which is given by
$$I_{\tau_{\X}} ([V]) =  {\rm ch} (p^* V) \cdot  {\rm ch} (\alpha_{\X}^{-1}) \cdot {\rm Td}(\I\X).$$
Here $p: \I\X \rightarrow \X$ is the natural projection morphism, while $\alpha_{\X}^{-1}$ is a ``localization" class in 
$K_0 (\I \X)$; see \cite{toen} p.29.
The Grothendieck-Riemann-Roch theorem in \cite{toen} \S4 and \cite{edidin} \S5 
states that if $f: \X \rightarrow \Y$  is a proper 
morphism of quotient stacks with the resolution property, then there is a commutative diagram
$$\xymatrix{
K_0 (\X)\ar[r]^{ \I_{\tau_{\X}} \hspace{4mm}} \ar[d]_{f_*}  & {\rm Ch}^*(\I\X)\otimes \CC \ar[d]^{f_*} \\
K_0 (\Y)  \ar[r]^{\I_{\tau_{\Y}} \hspace{4mm}} & {\rm Ch}^*(\I\Y)\otimes \CC.}
$$
As a final piece of notation, following the standard case, for every stack $\X$ as above and every $\F \in \D(\X)$ we define 
the \emph{orbifold Mukai vector} associated to $\F$ as
$$v_{{\rm orb}}(\F) : = {\rm ch} (p^* \F) \cdot \sqrt{ {\rm ch} (\alpha_{\X}^{-1}) \cdot {\rm Td}(\I\X)} \in H^* (\I \X, \CC).$$

Going back to orbifolds, let $\X$ and $\Y$ be two such, and 
$\E \in \D(\X \times \Y)$ inducing the integral functor $\Phi_{\E} : \D(\X) \rightarrow \D(\Y)$.
Using $v_{{\rm orb}}(\E)  \in H^* (\I \X \times \I \Y, \CC)$, we define the \emph{orbifold cohomological Fourier-Mukai transform} as
$$\Phi_{\E}^H : H^* (\I\X, \CC) \longrightarrow H^*(\I\Y, \CC), \,\,\,\,\alpha \mapsto {p_2}_* \big(p_1^* \alpha \cdot v_{{\rm orb}} (\E)\big).$$
Note that the product above is the usual cup product rather than the 
orbifold cup-product of \cite{CR}.

The following results are analogues of standard facts in the case of smooth projective varieties; see for instance 
\cite{huybrechts} Corollary 5.29, Proposition 5.33 and Proposition 5.39.

\begin{proposition}\label{general_isomorphism}
If $\Phi_{\E}$ is an equivalence of derived categories, then the induced orbifold cohomological Fourier-Mukai transform 
$$\Phi_{\E}^H : H^* (\I\X, \CC) \longrightarrow  H^* (\I\Y, \CC)$$
is a (non-graded) isomorphism of vector spaces.
\end{proposition}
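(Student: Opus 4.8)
The plan is to follow the classical argument from the smooth projective case (cf. \cite{huybrechts} \S5.2): exhibit $\Phi^H_\E$ as the cohomological shadow of $\Phi_\E$, show that this assignment is functorial under composition of Fourier--Mukai kernels, and then invoke the quasi-inverse to conclude invertibility. First I would use Kawamata's orbifold analogue of Orlov's theorem, recalled above: since $\Phi_\E$ is an equivalence, its quasi-inverse is again an integral functor $\Phi_\F$ for some $\F \in \D(\Y \times \X)$, with $\Phi_\F \circ \Phi_\E \cong \id_{\D(\X)}$ and $\Phi_\E \circ \Phi_\F \cong \id_{\D(\Y)}$. The composition $\Phi_\F \circ \Phi_\E$ is itself an integral functor, with kernel $\G := \RR{p_{13}}_*\big(p_{12}^* \E \overset{\L}{\otimes} p_{23}^* \F\big) \in \D(\X \times \X)$; by the uniqueness clause in Kawamata's theorem, $\G \cong (\Delta_\X)_* \OO_\X$, the pushforward of the structure sheaf along the diagonal $\Delta_\X \colon \X \hookrightarrow \X \times \X$, and symmetrically on the $\Y$ side.

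Second, and this is where the real work lies, I would establish the composition law $\Phi^H_\G = \Phi^H_\F \circ \Phi^H_\E$ for the cohomological transforms. The inertia stack of a product splits, $\I(\X \times \Y) \simeq \I\X \times \I\Y$, so $v_{{\rm orb}}(\E)$ naturally lives on $\I\X \times \I\Y$. The square--root normalization in the definition of $v_{{\rm orb}}$ is arranged exactly so that the base--change and projection--formula manipulations that prove the composition law in the smooth case survive here, once one feeds in the Grothendieck--Riemann--Roch theorem for proper morphisms of quotient stacks recalled above (see \cite{toen}, \cite{edidin}) applied to the projections $p_{12}, p_{23}, p_{13}$ from $\X \times \Y \times \X$ and to the maps they induce on inertia stacks, together with compatibility of the orbifold Chern character with exterior products and flat pullback. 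Concretely, one expands $v_{{\rm orb}}(\G) = {p_{13}}_*\big(p_{12}^* v_{{\rm orb}}(\E) \cdot p_{23}^* v_{{\rm orb}}(\F) \cdot (\text{GRR correction factors})\big)$ and checks that the Todd factors produced by GRR on $\I(\X \times \Y \times \X)$ combine with the square roots so as to cancel precisely, reducing the identity to associativity of the cup product and the projection formula on inertia stacks. The main obstacle is bookkeeping: one must verify that the localization classes $\alpha$ restrict on the inertia stack of a product to the expected external product, and that the Todd classes of the inertia stacks of the relevant fibered products factor as required. This is routine in spirit, but needs care because the local groups at points of $\I\X$ vary.

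Third, I would compute $\Phi^H_{(\Delta_\X)_* \OO_\X}$. Applying GRR to the diagonal embedding $\Delta_\X$ and to the induced closed embedding $\I\X \hookrightarrow \I\X \times \I\X$ of inertia stacks (itself the diagonal of $\I\X$), one sees that $v_{{\rm orb}}\big((\Delta_\X)_* \OO_\X\big)$ is the ``cohomological diagonal'' class on $\I\X \times \I\X$, twisted by Todd classes in just the way that makes ${p_2}_*\big(p_1^* \alpha \cdot v_{{\rm orb}}((\Delta_\X)_*\OO_\X)\big) = \alpha$ for every $\alpha \in H^*(\I\X, \CC)$; here again the square roots are what force the Todd factors of the two projections from $\I\X \times \I\X$ to combine into the genuine Todd class that GRR for $\Delta_\X$ delivers. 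Granting the first two steps, this yields $\Phi^H_\F \circ \Phi^H_\E = \Phi^H_{(\Delta_\X)_*\OO_\X} = \id_{H^*(\I\X,\CC)}$ and, symmetrically, $\Phi^H_\E \circ \Phi^H_\F = \id_{H^*(\I\Y,\CC)}$, so $\Phi^H_\E$ is a linear isomorphism. It fails to respect the grading only because $v_{{\rm orb}}(\E)$ is in general inhomogeneous, so the transform mixes cohomological degrees --- exactly as, and no worse than, in the smooth case.
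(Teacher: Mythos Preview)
Your proposal is correct and follows essentially the same route as the paper: reduce to showing $\Phi^H_{\OO_{\Delta_\X}} = \id$ via the composition law, then compute $v_{{\rm orb}}(\OO_{\Delta_\X})$ by applying the To\"en--Edidin Grothendieck--Riemann--Roch theorem to the diagonal embedding and using the projection formula to see it equals ${i_{\I\X}}_*(1)$. The only difference is one of emphasis: the paper asserts the composition law $\Psi^H_\F \circ \Phi^H_\E = \Phi^H_{\OO_{\Delta_\X}}$ as ``not hard to see'' and devotes its attention to the diagonal computation, whereas you spell out the bookkeeping for the composition law in more detail.
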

\begin{proof}
Denote by $\F$ the right adjoint of $\E$. The composition 
$\Psi_{\F} \circ \Phi_{\E}$ is the identity on $\D(\X)$, given by the kernel $\OO_{\Delta_{\X}}$. 
Similarly, the composition 
$\Phi_{\E} \circ \Psi_{\F}$ is the identity on $\D(\Y)$, given by the kernel $\OO_{\Delta_{\Y}}$. 
It is not hard to see then that the composition $\Psi_{\F}^H \circ \Phi_{\E}^H$ is the endomorphism 
$\Phi_{\OO_{\Delta_{\X}}}^H$ of $H^* (\I\X , \CC)$, and similarly in the opposite direction.

It suffices to show that this endomorphism is the identity. Note that it corresponds to the Mukai vector
$$v_{{\rm orb}} (\OO_{\Delta_{\X}})  =  \frac{I_{\tau_{\X \times \X}}}{\sqrt{ {\rm ch} (\alpha_{\X \times \X}^{-1}) \cdot 
{\rm Td}(\I\X \times \I\X)}} \in H^* (\I \X \times \I \X, \CC).$$
In order to prove the assertion, we apply the Grothendieck-Riemann-Roch theorem stated above to the 
diagonal embedding $i_{\X} : \X \hookrightarrow \X \times \X$. This induces a commutative diagram 
$$\xymatrix{
\I\X \ar[r]^{ i_{\I\X} \hspace{4mm}} \ar[d]_{p_{\X}}  & \I\X \times \I\X   \ar[d]^{p_{\X}\times p_{\X}} \\
\X \ar[r]^{ i_{\X} \hspace{4mm}} & \X \times \X}
$$
and the theorem applied to the object $\OO_{\Delta_{\X}}$ gives
$$I_{\tau_{\X\times \X}} (\OO_{\Delta_{\X}}) = (i_{\I\X})_* I_{\tau_{\X}} (\OO_{\X}). $$
Note now that 
$$I_{\tau_{\X}} (\OO_{\X}) = {\rm ch} (\alpha_{\X}^{-1}) \cdot {\rm td}(\I \X) = i_{\I\X}^* \sqrt{ {\rm ch} (\alpha_{\X \times \X}^{-1}) \cdot 
{\rm Td}(\I\X \times \I\X)},$$
where the first equality follows from the definition, while the second is just restriction to the diagonal.
The projection formula then implies
$$v_{{\rm orb}} (\OO_{\Delta_{\X}})  = {i_{\I \X}}_* (1),$$
from which it is immediate to conclude that $\Phi_{\OO_{\Delta_{\X}}}^H$ is the identity.
\end{proof}

\begin{proposition}\label{hodge_inclusion}
Let $\Phi_{\E} : \D(\X) \rightarrow \D(\Y)$ be an equivalence of derived categories of orbifolds.
Then, for every $p$ and $q$, we have
$$\Phi_{\E}^H \big( H^{p,q}_{\rm orb} (\X) \big) \subset \bigoplus_{r-s = p-q} H^{r,s}_{\rm orb} (\Y).$$
\end{proposition}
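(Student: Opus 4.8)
The plan is to reduce the statement to the analogous fact on the underlying varieties and their cohomology, where the key point is that the orbifold Mukai vector $v_{\rm orb}(\E)$, being built out of Chern characters and Todd classes, is an \emph{algebraic} cohomology class, hence of pure type $(k,k)$ in each degree $2k$ on each component of $\I\X \times \I\Y$. First I would recall that, by definition, $H^{p,q}_{\rm orb}(\X) = \bigoplus_{\Z \subset \I\X} H^{p-a(\Z),q-a(\Z)}(Z)$, and that the cohomology $H^*(\I\X,\CC) = \bigoplus_\Z H^*(Z,\CC)$ carries the ordinary (pure) Hodge structures on the components $Z$ (using \cite{steenbrink} as quoted above); the orbifold Hodge filtration is just the ordinary one shifted by $a(\Z)$. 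So a class $\alpha \in H^{p,q}_{\rm orb}(\X)$ is, componentwise, an ordinary class of type $(p-a(\Z),q-a(\Z))$ on $Z$.

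Next I would analyze the three operations making up $\Phi_{\E}^H(\alpha) = (p_2)_*\bigl(p_1^*\alpha \cdot v_{\rm orb}(\E)\bigr)$ with respect to this componentwise Hodge structure. The pullback $p_1^*\colon H^*(\I\X,\CC) \to H^*(\I\X \times \I\Y,\CC)$ along the projection of inertia stacks is a morphism of Hodge structures, hence preserves $(r,s)$-type on each target component; likewise the proper pushforward $(p_2)_*$ is a morphism of Hodge structures of the appropriate Tate-twisted bidegree (Poincaré duality on the smooth-in-the-orbifold-sense components $Z$, valid again by \cite{steenbrink}), so it shifts $(r,s)$ by a constant amount $(c,c)$ that depends only on the relative dimensions of the inertia components and on the shift numbers $a(\Z)$ — crucially it is \emph{balanced}, $r-s$ is unchanged. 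The middle factor is cup product with $v_{\rm orb}(\E)$: since $v_{\rm orb}(\E) = {\rm ch}(p^*\E)\cdot\sqrt{{\rm ch}(\alpha_{\X\times\Y}^{-1})\cdot{\rm Td}(\I(\X\times\Y))}$ lies in the image of the cycle class map ${\rm Ch}^*(\I\X\times\I\Y)\otimes\CC \to H^*(\I\X\times\I\Y,\CC)$, its degree-$2k$ component is of ordinary Hodge type $(k,k)$ on each component; cupping with such a class sends type $(r,s)$ to type $(r+k,s+k)$, i.e. preserves $r-s$. The one point requiring care is bookkeeping: on the inertia side the shift numbers $a(\Z)$, $a(\Z')$, and the shift number of the relevant component of $\I(\X\times\Y)$ must be compatible, so that after the Tate twists built into the definition of $H^*_{\rm orb}$ the total $p-q \mapsto r-s$ really is preserved rather than merely shifted; this is exactly the orbifold analogue of the well-known fact in the smooth case that the (topological) Mukai vector lives in $\bigoplus_k H^{k,k}$ after the $\sqrt{\rm Td}$ normalization, and I would verify it by checking that the shift number of a component of $\I(\X\times\Y)$ is the sum of the shift numbers of the corresponding components of $\I\X$ and $\I\Y$ (which follows from the product decomposition of tangent spaces and eigenvalue data defining $a(\Z)$).

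Putting these together: $p_1^*\alpha$ has componentwise type $(p-a(\Z)+\text{correction},\,q-a(\Z)+\text{correction})$ with the \emph{same} difference $p-q$ in the orbifold grading; cupping with $v_{\rm orb}(\E)$ preserves that difference; and $(p_2)_*$ preserves it as well. Hence $\Phi_{\E}^H(\alpha) \in \bigoplus_{r-s=p-q} H^{r,s}_{\rm orb}(\Y)$, which is the claim. (The combination with \propositionref{general_isomorphism}, giving that $\Phi_{\E}^H$ is an isomorphism, then yields equality of the $(r-s=i)$-graded pieces and hence Theorem \ref{general}(i).)

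I expect the main obstacle to be the last, purely combinatorial step: tracking the shift numbers and Tate twists across the inertia stack of the product $\X\times\Y$ carefully enough to be sure the bidegree difference is exactly preserved — everything else (functoriality of $f_*$, $f^*$ for Hodge structures on $V$-varieties, algebraicity of $v_{\rm orb}$) is standard. In the smooth projective case this is invisible because all shift numbers vanish; here it is the real content of the orbifold refinement, and the cleanest way to handle it is to phrase $v_{\rm orb}$, $p_1^*$, $(p_2)_*$ directly as morphisms of the Tate-twisted Hodge structures appearing in the definition of $H^*_{\rm orb}$, and then the degree accounting becomes automatic.
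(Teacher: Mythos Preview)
Your approach is correct and rests on the same key observation as the paper: $v_{\rm orb}(\E)$ is an algebraic class, hence of pure $(k,k)$ type on each component of $\I\X\times\I\Y$, so cupping with it (and the flanking pullback and pushforward) preserves the difference $p-q$.

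The paper's proof is considerably more compressed than yours. Rather than tracking the three operations $p_1^*$, cup product, $(p_2)_*$ separately on components of the inertia stack, it works directly in the orbifold Hodge bigrading and uses K\"unneth: algebraicity of $v_{\rm orb}(\E)$ means it decomposes as $\sum \alpha^{u,v}\boxtimes\beta^{r,s}$ with $r-s=v-u$, and then for $(\alpha^{p,q}\cdot\alpha^{u,v})\boxtimes\beta^{r,s}$ to survive the pushforward $(p_2)_*$ one needs $\alpha^{p,q}\cdot\alpha^{u,v}\in H^{n,n}_{\rm orb}(\X)$, forcing $p+u=q+v=n$ and hence $p-q=v-u=r-s$. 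This sidesteps the componentwise bookkeeping you worry about, because the shift numbers are already absorbed into the orbifold bidegree. The combinatorial check you flag (additivity $a(\Z_1\times\Z_2)=a(\Z_1)+a(\Z_2)$, needed so that algebraic classes on $\I\X\times\I\Y$ really sit in $\bigoplus_k H^{k,k}_{\rm orb}(\X\times\Y)$) is exactly what the paper is silently invoking when it writes ``is an algebraic class, so\ldots''; you are right that this is the one nontrivial point, and your observation that it follows from the product structure of tangent spaces is the correct justification. Your version is more explicit about this step; the paper's is cleaner once that step is granted.
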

\begin{proof}
The orbifold Mukai vector $v_{{\rm orb}} (\E) \in H^* (\I\X \times \I\Y , \CC)$ is an algebraic class, so
$$v_{{\rm orb}} (\E) = \sum \alpha^{u, v} \boxtimes \beta^{r,s}, \,\,\,\, r-s = v - u,$$
with $\alpha^{u, v} \in H^{u,v}_{\rm orb} (\X)$ and $\beta^{r, s} \in H^{r,s}_{\rm orb} (\Y)$.

Let now $\alpha^{p, q}$ be a class in $H^{p,q}_{\rm orb} (\X)$. In order for $(\alpha^{p,q} \cdot \alpha^{u, v}) 
\boxtimes \beta^{r,s}$ to contribute non-trivially to $q_* (p^* \alpha^{p,q} \cdot v(\E))$, we must have that 
$\alpha^{p,q} \cdot \alpha^{u, v} \in H^{n, n}_{\rm orb} (\X)$. This implies that $p + u= q + v = n$, and in 
particular
$$p - q = v - u = r-s.$$
\end{proof}

Putting together Proposition \ref{general_isomorphism} and Proposition \ref{hodge_inclusion}, we obtain Theorem
\ref{general}(i).

\begin{corollary}\label{columns}
If $\Phi_{\E} : \D(\X) \rightarrow \D(\Y)$ is an equivalence of derived categories of orbifolds, then $\Phi_{\E}^H$ induces
an isomorphism
$$\bigoplus_{p-q = i} H^{p,q}_{\rm orb} (\X) \simeq \bigoplus_{p-q = i} H^{p,q}_{\rm orb} (\Y)$$
for every $i = - n, \ldots, n$ and $p, q \in \QQ$.
\end{corollary}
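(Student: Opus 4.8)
The plan is to obtain this as a formal consequence of the two preceding propositions, once the orbifold cohomology has been reorganized by ``columns''. First I would record the direct sum decomposition
$$H^*(\I\X,\CC) \,=\, \bigoplus_{i=-n}^{n} C_i(\X), \qquad C_i(\X) \,:=\, \bigoplus_{p-q=i} H^{p,q}_{\rm orb}(\X),$$
and the analogous one for $\Y$. This is just a rearrangement of the defining formula for $H^*_{\rm orb}$: each $H^{i-2a(\Z)}(Z,\QQ)$ is a pure Hodge structure, so complexifying and summing over all $i$ and all components $\Z\subset\I\X$ recovers $\bigoplus_{p,q} H^{p,q}_{\rm orb}(\X) = H^*(\I\X,\CC)$, the sum being over $p,q\in\QQ$ with $p-a(\Z),q-a(\Z)\in\ZZ$; moreover the column index satisfies $|i|=|p-q|\le\dim Z\le n$ because $\I\X$ is quasi-finite over $\X$, which accounts for the range $-n\le i\le n$.

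Next I would invoke Proposition~\ref{hodge_inclusion}: summing the inclusion $\Phi^H_{\E}\big(H^{p,q}_{\rm orb}(\X)\big)\subset\bigoplus_{r-s=p-q} H^{r,s}_{\rm orb}(\Y)$ over all $(p,q)$ with $p-q=i$ shows that $\Phi^H_{\E}$ maps $C_i(\X)$ into $C_i(\Y)$; in other words $\Phi^H_{\E}$ respects the two column decompositions. By Proposition~\ref{general_isomorphism} it is moreover a linear isomorphism $H^*(\I\X,\CC)\xrightarrow{\ \sim\ }H^*(\I\Y,\CC)$. A linear isomorphism carrying each summand of one direct-sum decomposition into the corresponding summand of another necessarily restricts to an isomorphism on each summand: injectivity is automatic, and for $y\in C_i(\Y)$, writing a preimage as $x=\sum_j x_j$ with $x_j\in C_j(\X)$, the identity $\sum_j\Phi^H_{\E}(x_j)=y$ with $\Phi^H_{\E}(x_j)\in C_j(\Y)$ forces $\Phi^H_{\E}(x_j)=0$ for $j\ne i$, so $y=\Phi^H_{\E}(x_i)$ lies in the image of $C_i(\X)$. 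Hence $\Phi^H_{\E}$ induces $C_i(\X)\xrightarrow{\ \sim\ }C_i(\Y)$ for every $i$, which is the assertion. (Equivalently, one notes that the right adjoint kernel $\F$ induces the inverse transform $\Psi^H_{\F}=(\Phi^H_{\E})^{-1}$, which is again of Fourier--Mukai type and hence column-preserving by Proposition~\ref{hodge_inclusion}, so the two restrict to mutually inverse maps $C_i(\X)\rightleftarrows C_i(\Y)$.)

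There is essentially no obstacle here; all the substance is already contained in Propositions~\ref{general_isomorphism} and~\ref{hodge_inclusion}. The only point that deserves a moment's care is the first step — checking that the Hodge pieces $H^{p,q}_{\rm orb}$, indexed by \emph{rational} $p,q$, genuinely assemble over $\CC$ into $H^*(\I\X,\CC)$, and that the resulting column index is confined to $\{-n,\dots,n\}$ — which is immediate from the definition of $H^*_{\rm orb}$ and the purity statement recorded after~(\ref{orbifold_hodge_def}).
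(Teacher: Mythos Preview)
Your proposal is correct and is exactly the argument the paper intends: the paper gives no explicit proof of this corollary, saying only that it follows by ``putting together Proposition~\ref{general_isomorphism} and Proposition~\ref{hodge_inclusion}'', and what you have written is precisely the spelling-out of that combination. Your care in checking that the column index $p-q$ is an integer lying in $\{-n,\dots,n\}$ and that a global isomorphism preserving the summands restricts to summand-wise isomorphisms is welcome but not a departure from the paper's approach.
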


For the sake of completeness, note finally that the Riemann-Roch formula stated above also implies the analogue of the usual 
commutation of the derived and cohomological Fourier-Mukai transforms, again with a very similar proof which I do not repeat here.

\begin{proposition}
Let $\E  \in \D (\X \times \Y)$. Then for every $A \in \D(\X)$
$$\Phi^H_{\E} \big(v_{{\rm orb}} (A)\big) \simeq  v_{{\rm orb}} \big(\Phi_{\E} (A)\big).$$ 
\end{proposition}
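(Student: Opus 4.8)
The plan is to follow the standard proof from the smooth projective case (as in \cite{huybrechts} Proposition 5.39), adapting each step to the orbifold setting using the orbifold Grothendieck-Riemann-Roch theorem already established. The statement to prove is that for $\E \in \D(\X \times \Y)$ and $A \in \D(\X)$, one has $\Phi^H_{\E}\big(v_{\rm orb}(A)\big) = v_{\rm orb}\big(\Phi_{\E}(A)\big)$ in $H^*(\I\Y, \CC)$. Unwinding the definitions, the left-hand side is ${p_2}_*\big(p_1^* v_{\rm orb}(A) \cdot v_{\rm orb}(\E)\big)$, where $p_1, p_2$ are the projections from $\I\X \times \I\Y$, while the right-hand side is $v_{\rm orb}\big(\RR{q_2}_*(q_1^* A \overset{\L}{\otimes} \E)\big)$ with $q_1, q_2$ the projections from $\X \times \Y$.

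First I would express everything through the homomorphism $I\tau$ rather than $v_{\rm orb}$ directly, since Grothendieck-Riemann-Roch is naturally stated in terms of $I\tau$. The key observation is that $v_{\rm orb}(\F)$ and $I\tau(\F)$ differ by multiplication by the fixed (invertible over $\CC$) class $\sqrt{{\rm ch}(\alpha_{\X}^{-1}) \cdot {\rm Td}(\I\X)}$, and these ``normalizing'' classes satisfy a compatibility under exterior product: on $\I(\X \times \Y) = \I\X \times \I\Y$ the class for $\X \times \Y$ is the exterior product of the classes for $\X$ and $\Y$, because the inertia stack, the localization class $\alpha$, and the Todd class are all multiplicative under products. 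This lets me convert the Mukai-vector formulation into the $I\tau$-formulation and back.

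Next, the heart of the argument: apply the orbifold Grothendieck-Riemann-Roch theorem to the proper morphism $q_2 \colon \X \times \Y \to \Y$ (restricted appropriately, or to the composite with the inclusion of the support of $\E$), applied to the object $q_1^* A \overset{\L}{\otimes} \E$. This yields $I\tau_{\Y}\big({q_2}_*(q_1^* A \overset{\L}{\otimes} \E)\big) = (q_2)_* I\tau_{\X\times\Y}\big(q_1^* A \overset{\L}{\otimes} \E\big)$, where on the right $(q_2)_*$ now denotes pushforward along $\I(\X\times\Y) \to \I\Y$. Then I would use the multiplicativity of the Chern character, $I\tau_{\X\times\Y}(q_1^* A \overset{\L}{\otimes} \E) = p_1^*\big({\rm ch}(p^* A)\big) \cdot I\tau_{\X\times\Y}(\E)$ — here one needs that pulling back $A$ along $q_1$ and then to the inertia stack agrees with pulling back ${\rm ch}(p^*A)$ along $p_1$, which is a base-change compatibility of the inertia-stack construction — together with the projection formula to move $p_1^*(\cdots)$ outside the pushforward $(q_2)_*$. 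Assembling these and dividing by the normalizing classes produces exactly ${p_2}_*\big(p_1^* v_{\rm orb}(A) \cdot v_{\rm orb}(\E)\big)$.

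The main obstacle I expect is purely bookkeeping of the inertia-stack functoriality: one must check carefully that $\I(\X\times\Y) \simeq \I\X \times \I\Y$ identifies all the relevant maps (the two projections, the diagonal-type base changes, the morphism $\I(\X\times\Y) \to \I\Y$ induced by $q_2$) compatibly, and that the localization classes $\alpha_{\X\times\Y}^{-1}$ and Todd classes decompose as exterior products under this identification — and likewise that ${\rm ch}(p^*(-))$ is compatible with pullback along $q_1$. None of this is deep, and it mirrors exactly the smooth case where $\I\X = \X$, but it is the place where the ``technical difficulties related to definitions in the orbifold case'' referred to in the introduction actually bite. Since the proof is entirely parallel to \cite{huybrechts} Proposition 5.39 once these identifications are in place, I would, as the author does, simply indicate the strategy and omit the routine verification.
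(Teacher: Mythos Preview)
Your proposal is correct and matches the paper's approach exactly: the paper does not spell out a proof either, stating only that ``the Riemann-Roch formula stated above also implies the analogue of the usual commutation of the derived and cohomological Fourier-Mukai transforms, again with a very similar proof which I do not repeat here.'' Your sketch via orbifold GRR applied to $q_2$, multiplicativity of the Chern character, the projection formula, and the product decomposition of the inertia stack and of the normalizing classes is precisely the argument the paper is alluding to (the direct analogue of \cite{huybrechts} Proposition~5.39).
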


\subsection{Further remarks in the case of global quotients.}
When our orbifold is a global quotient $\X = [Z/G]$, then just as in the case of smooth projective varieties
the result in Corollary \ref{columns} can be interpreted 
as the derived invariance of orbifold Hochschild homology, after applying the Hochschild-Kostant-Rosenberg 
isomorphism on all (covers of the) components of the inertia stack. As A. C\u ald\u araru points out, this approach should work for all orbifolds, but the corresponding interpretation of Hochschild homology (i.e. the analogue of Theorem \ref{hochschild_decomposition} below) has not yet been proved in the non-global case.

I start by recalling a slightly different interpretation of the orbifold cohomology groups in the case of global quotients. 
We assume henceforth that $Z$ is a smooth projective complex variety of dimension $n$, 
and $G$ a finite group of automorphisms acting on $Z$. Given an element $g \in G$, we denote by:

\noindent
$\bullet$~ $Z^g$ the fixed point set of $g$, and $Z^g_\alpha$ its 
connected components.


\noindent
$\bullet$~$a(g, x)$ the \emph{age} of $g$ at a point $x \in Z^g$; this is defined as in \S2.1, and depends only 
on the connected component $Z^g_{\alpha}$ of $x \in Z^g$, so it will be denoted $a(g, \alpha)$.

An equivalent interpretation of orbifold singular cohomology and Dolbeaut cohomology spaces 
was given by Fantechi-G\"ottsche in \cite{FG} \S1. Concretely, for the complex orbifold cohomology groups of $[Z/G]$ one has
$$H^i_{{\rm orb}} ([Z/G], \CC) \simeq \left( \bigoplus_{g, \alpha} H^{i - 2 a(g, \alpha)} 
(Z^g_\alpha, \CC)\right)^G,$$
where the invariants are taken with respect to the action of $G$ by conjugation.
Similarly, the orbifold Dolbeaut cohomology groups of $[Z/G]$ satisfy 
\begin{equation}\label{hodge_global}
H^{p,q}_{{\rm orb}} ([Z/G])\simeq 
\left(\bigoplus_{g, \alpha} H^{p - a(g, \alpha), q - a(g, \alpha)} (Z^g_\alpha)\right)^G,
\end{equation}
with dimensions equal to the orbifold 
Hodge numbers $h^{p,q}_{{\rm orb}} ([Z/G])$.

In the context of this section, by \emph{orbifold Hochschild homology} we mean the Hochschild homology 
${\rm HH}_{\bullet} ([Z/G])$ of the stack.
By analogy with the isomorphisms above one has the following result, 
which appears in various forms in \cite{baranovsky} Theorem 1, \cite{caldararu2} Theorem 6.16, and \cite{ganter} Theorem 6.3 (the cohomology version). 

\begin{theorem}\label{hochschild_decomposition}
There is an isomorphism
$${\rm HH}_{\bullet} ([Z/G]) \simeq \left( \bigoplus_{g, \alpha} {\rm HH}_{\bullet} (Z^g_\alpha)\right)^G,$$
where the spaces on the right hand side are the direct sums of the usual Hochschild homology spaces of $X^g_\alpha$, and the invariants are taken with respect to the action by conjugation.
\end{theorem}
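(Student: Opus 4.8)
The plan is to compute $\mathrm{HH}_\bullet([Z/G])$ directly from its definition as the self-Tor (or self-Ext) of the structure sheaf of the diagonal, and to organize the computation using the equivariant Hochschild-Kostant-Rosenberg isomorphism on the inertia stack. First I would recall that for a smooth Deligne-Mumford stack $\X$ one has $\mathrm{HH}_\bullet(\X) \simeq \bigoplus_k H^{k}\big(\I\X, \bigwedge^{\bullet} T_{\I\X}\,[\text{shift}]\big)$, the orbifold HKR theorem; in the global quotient case $\X = [Z/G]$ the inertia stack is $\I\X = \big[\big(\coprod_{g\in G} Z^g\big)/G\big]$, with $G$ acting by $h\colon Z^g \to Z^{hgh^{-1}}$. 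Taking $G$-invariants then expresses $\mathrm{HH}_\bullet([Z/G])$ as $\big(\bigoplus_{g}\bigoplus_{\alpha} H^\bullet(Z^g_\alpha, \bigwedge^\bullet T_{Z^g_\alpha}[\cdot])\big)^G$, and applying ordinary HKR on each smooth fixed-point component $Z^g_\alpha$ identifies each summand with $\mathrm{HH}_\bullet(Z^g_\alpha)$. This is the same bookkeeping as in the proof of \eqref{hodge_global}, just with $\bigwedge^\bullet T$ in place of $\Omega^\bullet$ and with the age shifts absorbed into the grading.

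Concretely, I would carry out the steps in this order. Step one: identify $\mathrm{HH}_\bullet([Z/G])$ with the equivariant Hochschild homology $\mathrm{HH}_\bullet(Z,G)$ of the pair $(Z,G)$, i.e.\ with $\bigoplus_{g\in G}\mathrm{HH}_\bullet(Z; g)^G$ where $\mathrm{HH}_\bullet(Z;g) = \mathrm{Tor}^{Z\times Z}_{-\bullet}(\OO_{\Delta}, (\mathrm{id}\times g)_*\OO_{\Delta})$ is the ``$g$-twisted'' sector; this is the form of the statement appearing in \cite{baranovsky}, \cite{caldararu2}, \cite{ganter}. Step two: for a fixed $g$, resolve $\OO_{\Delta}$ by a Koszul-type complex and compute the $g$-twisted Tor, using that $g$ acts with finite order so its fixed locus $Z^g$ is smooth and the conormal bundle $N^\vee_{Z^g/Z}$ carries an action of $g$ with no trivial summand (no pseudo-reflections is not needed here, only smoothness of $Z^g$). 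The outcome is $\mathrm{HH}_\bullet(Z;g) \simeq \bigoplus_\alpha H^\bullet\big(Z^g_\alpha, \bigwedge^\bullet T_{Z^g_\alpha} \otimes \det(N_{Z^g_\alpha/Z})^{?}\big)$ — the precise twist by a power of the determinant of the normal bundle is exactly what a careful Koszul computation yields, and it recombines, after HKR on $Z^g_\alpha$, into $\mathrm{HH}_\bullet(Z^g_\alpha)$ with a degree shift by $2a(g,\alpha)$ (or $\mathrm{codim}\,Z^g_\alpha$, depending on the grading convention). Step three: take $G$-invariants, noting that $G$ permutes the twisted sectors by conjugation and the fixed components accordingly, to arrive at the stated formula.

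The main obstacle is Step two: getting the $g$-twisted local computation right, in particular tracking the determinant-of-normal-bundle twist and the resulting degree shift, and checking that it matches the age $a(g,\alpha)$ appearing in the cohomological statement \eqref{hodge_global}. This is where the finite order of $g$ (so that $g$ acts semisimply on $N_{Z^g/Z}$, with eigenvalues nontrivial roots of unity) is used, and where one must be careful that the HKR identifications on the various $Z^g_\alpha$ are compatible with the $G$-action by conjugation so that taking invariants commutes with the decomposition. Since all three cited references establish essentially this computation, I would cite them for the twisted-sector identification and devote the write-up mainly to explaining why $[Z/G]$ and the equivariant pair $(Z,G)$ have the same Hochschild homology (Step one) and why taking $G$-invariants produces precisely the right-hand side (Step three), both of which are formal once the HKR-type input of Step two is in hand.
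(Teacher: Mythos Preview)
The paper does not prove this theorem: it is quoted with references to \cite{baranovsky} Theorem~1, \cite{caldararu2} Theorem~6.16, and \cite{ganter} Theorem~6.3, and then used as input to Proposition~\ref{orbifold_HKR}. So there is no proof in the paper to compare against; you are effectively sketching the content of the cited literature, and your three-step outline is indeed broadly the shape of the argument found there.

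One correction to your Step~2, however: the age $a(g,\alpha)$ does not belong in this computation. In the paper's convention ${\rm HH}_i(Y) = \Ext^{i-\dim Y}_{Y\times Y}(i_*\OO_Y, i_*\omega_Y)$ the theorem is stated with the \emph{same} index $i$ on both sides, and the shift your Koszul computation produces is $\codim\, Z^g_\alpha$, which is precisely what the $\dim Y$ built into the definition absorbs (while the $\omega_Y$ accounts for the determinant-of-normal-bundle twist via adjunction $\omega_{Z^g_\alpha} \simeq \omega_Z|_{Z^g_\alpha} \otimes \det N_{Z^g_\alpha/Z}$). The age, by contrast, is a rational number that shifts the Hodge \emph{bigrading} without changing $p-q$; it enters only in the passage from Theorem~\ref{hochschild_decomposition} to Proposition~\ref{orbifold_HKR}, where one relabels $(p',q') = (p - a(g,\alpha),\, q - a(g,\alpha))$. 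Your phrase ``a degree shift by $2a(g,\alpha)$ (or $\codim\, Z^g_\alpha$, depending on the grading convention)'' conflates two genuinely different shifts --- only the second is relevant to the present statement, and the two numbers are in general unequal (indeed $a(g,\alpha)$ need not even be an integer).
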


Note that, denoting by $i$ the diagonal embedding of $Y$, I am using the indexing notation from the papers above, namely
$${\rm HH}_i (Y) : = {\rm Ext}^{i- \dim Y}_{Y\times Y} ( i _* \OO_Y, i_* \omega_Y),$$
which is slightly different from that in \cite{huybrechts} \S6.
With this convention, if $Y$ is smooth projective and $n = \dim Y$, the Hochschild-Kostant-Rosenberg
isomorphism (see e.g. \cite{huybrechts} \S6.1) reads 
\begin{equation}\label{hkr}
{\rm HH}_i (Y) \simeq \bigoplus_{p- q = i} H^{p, q} (Y).
\end{equation}

Combining this with Theorem \ref{hochschild_decomposition}, we have the following orbifold analogue:

\begin{proposition}\label{orbifold_HKR}
If $n = \dim Z$, for each $i$ there is an isomorphism
$${\rm HH}_i ([Z/G]) \simeq \bigoplus_{p-q = i} H^{p,q}_{{\rm orb}} ([Z/G]).$$
\end{proposition}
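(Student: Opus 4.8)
The plan is to deduce this purely formally from Theorem~\ref{hochschild_decomposition} together with the classical smooth Hochschild--Kostant--Rosenberg isomorphism \eqref{hkr}, applied componentwise on the inertia pieces $Z^g_\alpha$, keeping track of the age shifts as encoded in \eqref{hodge_global}. The first thing to check is that \eqref{hkr} does apply to each $Z^g_\alpha$: the fixed locus $Z^g$ of the finite-order automorphism $g$ of the smooth variety $Z$ is itself smooth (we work over $\CC$) and closed in $Z$, so each connected component $Z^g_\alpha$ is smooth and projective.

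First I would take the component in homological degree $i$ of the (graded) isomorphism in Theorem~\ref{hochschild_decomposition}, obtaining
$$ {\rm HH}_i ([Z/G]) \simeq \Bigl( \bigoplus_{g,\alpha} {\rm HH}_i (Z^g_\alpha) \Bigr)^G . $$
Next I would substitute \eqref{hkr} into each summand on the right. The point that makes this compatible with passing to $G$-invariants is that the HKR isomorphism is natural in the smooth projective variety, hence equivariant for the conjugation action of $G$, under which $h \in G$ carries $Z^g_\alpha$ isomorphically onto $Z^{hgh^{-1}}_{h\alpha}$ and intertwines the HKR isomorphisms. This yields
$$ {\rm HH}_i ([Z/G]) \simeq \Bigl( \bigoplus_{g,\alpha} \bigoplus_{p - q = i} H^{p,q}(Z^g_\alpha) \Bigr)^G . $$

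It then remains to identify the right-hand side with $\bigoplus_{p-q=i} H^{p,q}_{{\rm orb}}([Z/G])$. For each component I rewrite $H^{p,q}(Z^g_\alpha) = H^{p' - a(g,\alpha),\, q' - a(g,\alpha)}(Z^g_\alpha)$ with $p' := p + a(g,\alpha)$ and $q' := q + a(g,\alpha)$; note $p' - q' = p - q = i$, so the index set over which we sum is unchanged. Since the age satisfies $a(hgh^{-1}, h\alpha) = a(g,\alpha)$, the regrouping of the summands according to the value of the pair $(p',q')$ is preserved by the conjugation action, and therefore — $G$ being finite — taking invariants commutes with this direct-sum decomposition. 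Comparing with \eqref{hodge_global} on each graded piece gives
$$ \Bigl( \bigoplus_{g,\alpha} H^{p' - a(g,\alpha),\, q' - a(g,\alpha)}(Z^g_\alpha) \Bigr)^G \simeq H^{p',q'}_{{\rm orb}}([Z/G]), $$
and summing over $p' - q' = i$ yields the proposition.

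The only two steps requiring any justification are the $G$-equivariance of HKR and the interchange of the invariants functor with the $(p,q)$-grading after the age shift; both are routine (the first from naturality of HKR, the second because $G$ is finite and merely permutes the summands while preserving the new grading), so I do not expect any genuine obstacle. The content of the proposition is entirely carried by Theorem~\ref{hochschild_decomposition} and \eqref{hkr} read through the dictionary \eqref{hodge_global}, and the argument is essentially a bookkeeping of age shifts.
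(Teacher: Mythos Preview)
Your proof is correct and follows essentially the same route as the paper: start from Theorem~\ref{hochschild_decomposition} in degree $i$, apply the classical HKR isomorphism \eqref{hkr} on each $Z^g_\alpha$, reindex by the age shift, and identify the result via \eqref{hodge_global}. You add a little more justification (smoothness of the fixed loci, $G$-equivariance of HKR, commutation of invariants with the regrading), but the structure of the argument is identical to the paper's.
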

\begin{proof}
Theorem \ref{hochschild_decomposition} initiates the following sequence of isomorphisms:

\begin{eqnarray*}
{\rm HH}_i ([Z/G]) & \simeq & \left( \bigoplus_{g, \alpha} {\rm HH}_i (Z^g_\alpha)\right)^G   \\
& \simeq & \left( \bigoplus_{g, \alpha} \bigoplus_{p'-q' = i} H^{p', q'} (Z^g_\alpha) \right)^G \\
&\simeq  &   \left( \bigoplus_{g, \alpha} \bigoplus_{p-q = i} H^{p - a(g, \alpha), q - a(g, \alpha)} 
(Z^g_\alpha) \right)^G \\
&\simeq  &   \bigoplus_{p-q = i} \left(\bigoplus_{g, \alpha} H^{p - a(g, \alpha), q - a(g, \alpha)} 
 (Z^g_\alpha) \right)^G \\
 &\simeq &  \bigoplus_{p-q = i} H^{p,q}_{{\rm orb}} ([Z/G]),
\end{eqnarray*}
where the last three sums are taken over $p$ and $q$ rational such that $p - a(g, \alpha), q - a(g, \alpha) \in \ZZ$.
The second isomorphism uses (\ref{hkr}), while the last is (\ref{hodge_global}).
\end{proof}

Since Hochschild homology is well known to be an invariant of the derived category, this gives in particular 
another approach to Corollary 2.4 in the global quotient case.

\section{Rouquier's theorem in the orbifold setting}

\subsection{Orbifold Picard group}
The group $\Pic (\X)$ parametrizes isomorphism classes of line bundles on the Deligne-Mumford stack $\X$, with 
the tensor product operation. (In the case of orbifolds, this is equivalent to the notion of orbifold line bundles on the 
coarse space $X$, as for instance in \cite{BG} \S4.4.) We denote by $\Pic^0 (\X)$ its connected component of the identity, 
which is an abelian variety determined via the standard:

\begin{lemma}\label{orbifold_picard}
Let $\X$ be a normal projective variety with quotient singularities, $\X$ the associated orbifold, and $\widetilde X$ 
a resolution of singularities of $X$. Then 
$$ \Pic^0 (\X) \simeq \Pic^0 (X) \simeq \Pic^0 (\widetilde X).$$
In particular, $\dim \Pic^0 (\X) = h^{0,1}_{{\rm orb}} (\X)$.
\end{lemma}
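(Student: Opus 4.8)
The plan is to prove the two isomorphisms separately, reading them right to left. First I would establish $\Pic^0(X) \simeq \Pic^0(\widetilde X)$. Since $X$ has quotient singularities, it is normal with rational singularities, so the pullback of line bundles along a resolution $\mu \colon \widetilde X \to X$ induces a morphism $\Pic^0(X) \to \Pic^0(\widetilde X)$. To see it is an isomorphism of abelian varieties, I would identify both tangent spaces with $H^1(\shO)$: one has $\Pic^0(X)$ with tangent space $H^1(X,\shO_X)$ and $\Pic^0(\widetilde X)$ with tangent space $H^1(\widetilde X, \shO_{\widetilde X})$, and rationality of the singularities gives $R^i\mu_*\shO_{\widetilde X}=0$ for $i>0$ together with $\mu_*\shO_{\widetilde X}=\shO_X$, hence $H^1(X,\shO_X)\xrightarrow{\sim}H^1(\widetilde X,\shO_{\widetilde X})$ via the Leray spectral sequence. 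This shows the map is étale; since both sides are abelian varieties and $\Pic^0(\widetilde X)$ contains no nontrivial finite subgroup scheme arising this way — more precisely, surjectivity plus equality of dimensions plus the fact that a line bundle on $\widetilde X$ algebraically equivalent to zero whose pushforward is trivial must itself be trivial (because $\mu^*$ on $\Pic^0$ is injective, as $\mu_*\shO_{\widetilde X}=\shO_X$) — it is an isomorphism.

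Next I would establish $\Pic^0(\X) \simeq \Pic^0(X)$. The coarse moduli map $\pi\colon \X \to X$ gives a pullback $\pi^*\colon \Pic(X) \to \Pic(\X)$. Since $\X$ is an orbifold without pseudo-reflections, $\pi$ is an isomorphism over the smooth locus $X^{sm}$, whose complement has codimension $\ge 2$ in $X$; a line bundle on $\X$ restricted to $X^{sm}$ extends uniquely (by normality) to a rank-one reflexive sheaf on $X$, and some tensor power of any orbifold line bundle descends to an honest line bundle on $X$ — so on the level of connected components of identity, where we are allowed to pass to isogeny/finite-index issues, $\pi^*$ becomes an isomorphism $\Pic^0(X)\to\Pic^0(\X)$. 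The cleanest route is to note that $R\pi_*\shO_\X = \shO_X$ for a coarse space map of a tame (here, characteristic zero) Deligne–Mumford stack, so again $H^1(\X,\shO_\X)\simeq H^1(X,\shO_X)$, matching tangent spaces, and $\Pic^0(\X)$ is representable by an abelian variety (it is proper, smooth, connected as a group: the component of identity of the Picard scheme of a smooth proper DM stack over $\CC$). Injectivity of $\pi^*$ on $\Pic^0$ follows from $\pi_*\shO_\X = \shO_X$, and surjectivity onto $\Pic^0(\X)$ from the codimension-$\ge 2$ argument together with the fact that $\Pic^0(\X)$ is divisible (so the finite-order ambiguity between orbifold line bundles and their descents is invisible in the identity component). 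This gives the first isomorphism.

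Finally, the dimension count $\dim\Pic^0(\X) = h^{0,1}_{\rm orb}(\X)$ combines the above with \lemmaref{birat_hodge}: $\dim\Pic^0(\X) = \dim\Pic^0(\widetilde X) = h^{0,1}(\widetilde X) = h^{0,1}_{\rm orb}(\X)$, the middle equality being the classical fact that $\dim\Pic^0$ of a smooth projective variety equals $h^{0,1}$, and the last being exactly the conclusion of \lemmaref{birat_hodge}. The main obstacle I anticipate is the representability and structure of $\Pic^0(\X)$ as an abelian variety in the stacky setting, and the precise handling of the discrepancy between orbifold line bundles and reflexive sheaves on $X$ — i.e.\ making rigorous that passing to the identity component washes out the torsion phenomena inherent to $\Pic(\X)$ versus $\Pic(X)$. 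Everything else is a standard rational-singularities / Leray spectral sequence computation.
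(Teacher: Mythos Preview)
Your argument is correct, but the paper takes a shorter and more group-theoretic route, especially for the isomorphism $\Pic^0(\X)\simeq\Pic^0(X)$. Instead of computing tangent spaces via $R\pi_*\shO_\X=\shO_X$ and invoking divisibility, the paper observes that the orbifold $\X$ has finite \emph{order} $r$ (the lcm of the orders of the local stabilizers), and that the assignment $L\mapsto L^{\otimes r}$ kills all local linearizations and hence defines a homomorphism $\Pic(\X)\to\Pic(X)$. Composing with the inclusion $\pi^*\colon\Pic(X)\hookrightarrow\Pic(\X)$ in either order gives multiplication by $r$, so on identity components both maps are isogenies and the injective one, $\pi^*$, is an isomorphism. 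This bypasses the cohomological computation and largely sidesteps the representability issue you flag as an obstacle. For $\Pic^0(X)\simeq\Pic^0(\widetilde X)$ the paper simply appeals to birational invariance of the Picard variety as ``essentially definitional'', whereas you spell it out via rational singularities and the Leray spectral sequence; your version is more self-contained, the paper's more concise. Both proofs deduce the dimension statement from \lemmaref{birat_hodge} in the same way.
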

\begin{proof}
The numerical equality follows from the first statement and Lemma \ref{birat_hodge}. The second isomorphism in the first statement is essentially definitional (the Picard variety is a birational invariant, so independent of the smooth model chosen). To prove the first isomorphism, observe first that there is a standard inclusion
\begin{equation}\label{pic1}
\Pic (X) \hookrightarrow \Pic (\X), \,\,\,\, L \mapsto \pi^*L.
\end{equation}
On the other hand, since 
$\X$ is projective, it has finite \emph{order} $r$; this is the least common multiple of the orders of the stabilizer groups 
of points on local covers of $X$ of the form $U \rightarrow U/G$. We get a group homomorphism 
\begin{equation}\label{pic2}
\Pic (\X) \longrightarrow \Pic (X), \,\,\,\, L \mapsto L^{\otimes r},
\end{equation}  
as taking the $r$-th power turns any local linearization of $L$ into the trivial linearization. 
This is a finite surjective map, so combining (\ref{pic1}) and 
(\ref{pic2}) we immediately obtain the conclusion.
\end{proof}

As a well-known concrete example, recall that when $\X = [Z/G]$ be a global quotient orbifold, with $Z$ a smooth projective variety and $G$ a finite group of automorphisms of $Z$,  then
$$\Pic (\X) \simeq \Pic (Z; G),$$
the Picard group of $G$-linearized line bundles on $Z$. On the other hand, the group $\Pic(Z)^G$ of 
$G$-equivariant line bundles on $Z$ gives the Picard group of the coarse moduli scheme $Z/G$, 
and we have an exact sequence
$$0 \rightarrow H^1 (G, \CC^*) \rightarrow \Pic (Z; G) \rightarrow \Pic(Z)^G \rightarrow 
H^2 (G, \CC^*),$$
where the map in the middle is given by forgetting the linearization; see e.g. \cite{dolgachev} Remark 7.2. 
Note that this has a section induced by the inclusion 
$\Pic(Z)^G \hookrightarrow \Pic (Z; G)$ given by pullback. We conclude the 
isomorphism of connected components of the identity
$$\Pic^0  (\X) \simeq \Pic^0 (Z; G) \simeq \left( \Pic(Z)^G \right)^0.$$

\subsection{Orbifold automorphism group}
The automorphism group $\Aut (\X)$ has been studied extensively in the context of complex orbifolds, see e.g. \cite{fujiki}, \cite{nakagawa}; note that it has the structure of an algebraic group. In the general context of stacks, it is a more complicated categorical group object. However, when the 
Deligne-Mumford stack $\X$ is the canonical orbifold (without pseudo-reflections) associated to a variety $X$ with quotient singularities, $\Aut(\X)$ is a group, and in fact  (see e.g. \cite{FMN} Corollary 4.8):
$$ \Aut (\X) \simeq \Aut (X).$$

We denote by  $\Aut^0 (\X)$ its connected component of the identity; its tangent space at the origin is isomorphic to 
$H^0 (\X , T_{\X})$, the space of global vector fields on the stack $\X$.


Chevalley's theorem on the structure of connected algebraic groups (see e.g. \cite{brion} p.1) implies that there is a natural exact sequence
$$0 \longrightarrow \Aff (\Aut^0 (\X)) \longrightarrow \Aut^0 (\X) \longrightarrow \Alb (\Aut^0 (\X)) \rightarrow 0,$$
where $\Aff (\Aut^0 (X))$ is the maximal connected affine subgroup of $\Aut^0 (\X)$, and $ \Alb (\Aut^0 (\X))$ is 
its Albanese variety. 
According to a theorem of Fujiki \cite{fujiki}, analogous to the Matsumura-Nishi theorem \cite{matsumura} in the case of projective varieties, the induced morphism of abelian varieties
\begin{equation}\label{matsumura}
\Alb (\Aut^0 (\X)) \rightarrow \Alb (\X)
\end{equation} 
has finite kernel. 

\begin{remark}
We have, for instance by the discussion of $\Pic^0 (\X)$ above, that 
$$\Alb (\X) \simeq \Alb (X) = \Alb (Y),$$
where $Y$ is any resolution of singularities of $X$. What is proved in \cite{fujiki}, and can also be derived by slightly 
expanding the context in \cite{matsumura}, is that the induced morphism
$$\Alb (\Aut^0 (X)) \longrightarrow \Alb (X)$$
has finite kernel. 
\end{remark}

Finally, note that there is a natural action of $\Aut (\X)$ 
on $\Pic^0 (\X)$ (and more generally on $\Pic (\X)$). 
Since $\Pic^0 (\X)$ is an abelian variety, it is not hard to check as in \cite{PS} \S5  that the 
restriction of this action to $\Aut^0 (\X)$ is trivial.

\subsection{Rouquier's theorem}
Our next goal is to extend Rouquier's result \cite{rouquier} Th\'eor\`eme 4.18
on the connected component of the group of derived autoequivalences, together with the 
related formula in \cite{PS} Lemma 3.1, to the orbifold  setting:

\begin{theorem}\label{rouquier}
Let $\X$ and $\Y$ be projective orbifolds such that there exists a Fourier-Mukai equivalence 
$\Phi_\E: \D(\X) \rightarrow \D(\Y)$, corresponding to an object $\E \in \D(\X \times \Y)$. Then there
exists an isomorphism of algebraic groups
$$F_\E: \Aut^0 (\X) \times \Pic^0 (\X) \longrightarrow \Aut^0 (\Y) \times \Pic^0 (\Y)$$ 
determined by the fact that $F_\E (\varphi, L) = (\psi, M)$ if and only if on $\X \times \Y$ one has
\begin{equation}\label{rouquier_formula}
	p_1^{\ast} L \otimes (\varphi\times \id)^{\ast} \EE \simeq
	p_2^{\ast} M \otimes (\id \times \psi)_{\ast} \EE. 
\end{equation}
\end{theorem}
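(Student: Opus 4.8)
The plan is to transplant Rouquier's original argument, in the streamlined form used in \cite{PS}, to the orbifold setting, relying on the fact (established in \parref{orbifold_picard} and \S3.1--3.2) that $\Aut^0(\X) \simeq \Aut^0(X)$, $\Pic^0(\X) \simeq \Pic^0(X)$ are honest algebraic groups with the expected Chevalley/Matsumura-type structure. First I would check that the formula \eqref{rouquier_formula} does define a map: given a Fourier--Mukai equivalence $\Phi_\E$ and a pair $(\varphi, L) \in \Aut^0(\X) \times \Pic^0(\X)$, the object $(\varphi \times \id)^* \E \otimes p_1^* L$ is again the kernel of an equivalence $\D(\X) \to \D(\Y)$, namely $\Phi_\E \circ (\otimes L) \circ \varphi^*$ (up to the usual identification of the autoequivalence $\varphi^* \circ (\otimes L)$ of $\D(\X)$ with its kernel on $\X \times \X$). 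By Orlov's theorem in the orbifold case (\cite{kawamata3} Theorem 1.1, quoted in \S2.2) this kernel is isomorphic, uniquely up to isomorphism, to one of the form $(\id \times \psi)_* \E \otimes p_2^* M$ for a \emph{unique} pair $(\psi, M)$ precisely when the composite autoequivalence lies in the subgroup of $\Aut(\D(\Y))$ generated by shifts, pushforwards along automorphisms, and twists by line bundles; the point of Rouquier's theorem is exactly that conjugation by $\Phi_\E$ sends the connected subgroup $\Aut^0(\X) \ltimes \Pic^0(\X)$ into $\Aut^0(\Y) \ltimes \Pic^0(\Y)$.

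The key steps, in order, are: (1) assemble the candidate correspondence into a morphism of schemes by working in families — form the relative version of \eqref{rouquier_formula} over the base $\Aut^0(\X) \times \Pic^0(\X)$, i.e. consider the kernel on $\X \times \Y$ parametrized by the universal $(\varphi, L)$, and use the fact that the locus where a family of kernels is "of automorphism-times-line-bundle type" is represented by $\Aut^0(\Y) \times \Pic^0(\Y)$ (via Orlov-type rigidity and the representability of the relevant Hilbert/Quot-type functors for the Deligne--Mumford stacks involved, which hold because $\X, \Y$ are projective with projective coarse spaces); (2) observe the construction is manifestly symmetric — the quasi-inverse $\Phi_{\E^\vee}$ (or the right adjoint) produces the inverse map — so $F_\E$ is an isomorphism of varieties; (3) check it is a group homomorphism by composing two instances of \eqref{rouquier_formula} and using uniqueness in Orlov's theorem, exactly as in \cite{rouquier} and \cite{PS} Lemma 3.1; (4) conclude $F_\E$ is an isomorphism of algebraic groups. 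Throughout, the identification $\Aut(\X) \simeq \Aut(X)$, $\Pic^0(\X) \simeq \Pic^0(X)$ lets one reduce boundedness, properness and representability questions to the coarse spaces, which are honest projective varieties.

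The main obstacle I expect is step (1): making precise, in the stacky setting, the claim that being "a twist of $\E$ by (automorphism, line bundle)" is a locally closed (here, an open-and-closed, or at least locally closed and then connectedness-forced) condition on a family of Fourier--Mukai kernels, and that the resulting parameter space is $\Aut^0(\Y) \times \Pic^0(\Y)$ rather than something larger. In the scheme case this is handled by Rouquier via the structure of the group of autoequivalences and a rigidity statement for the kernel (an equivalence's kernel has no negative self-Ext's in appropriate degrees, so deformations are unobstructed and the "type" is locally constant); transplanting this requires the analogous vanishing/rigidity on $\X \times \Y$, which one gets from the fact that $\E$ induces an equivalence together with Serre duality and the pure Hodge structures available on the coarse spaces, but the bookkeeping with the inertia stack and with non-Gorenstein local groups must be done carefully. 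A secondary, more technical point is verifying that the action of $\Aut^0(\X)$ on $\Pic^0(\X)$ is trivial (noted at the end of \S3.2), which is what makes the product $\Aut^0(\X) \times \Pic^0(\X)$ rather than a semidirect product the correct object and is needed for step (3) to go through cleanly; this follows as in \cite{PS} \S5 since $\Pic^0(\X)$ is an abelian variety and a connected group acts trivially on the component group of its automorphisms.
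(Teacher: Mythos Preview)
Your outline follows the same overall strategy as the paper (conjugation by $\Phi_\E$ sends the connected identity component of standard autoequivalences to the corresponding one on the other side, assembled into a morphism via a family of kernels), and your identification of step (1) as the crux is correct. However, the paper handles precisely the obstacle you flag by a simplification you do not mention: rather than working with the family of kernels on $\X \times \Y$ and trying to detect when they are ``twists of $\E$'', one conjugates to obtain a family $\H$ of kernels on $\Y \times \Y$ (via convolution $\E * \F * \E_L$, where $\F$ is the universal family over $G_\X$). The fiber of $\H$ at the origin is then the concrete sheaf $\OO_{\Delta_\Y}$, and ordinary semicontinuity (of the cohomology sheaves of a bounded complex of vector bundles) shows that over a neighborhood $U$ of the origin each $\H_u$ is a sheaf whose restriction to each slice $\{y\}\times \Y$ is the structure sheaf of a point. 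A short representability lemma (Lemma~\ref{functor} in the paper) then identifies such objects with elements of $\Aut(\Y)\ltimes\Pic(\Y)$. Since $U$ generates $G_\X$, one gets the morphism, and the inverse comes from $\Phi_\E^{-1}$ as you say. Your ``rigidity / no negative self-Ext'' route is morally equivalent but harder to make precise on $\X \times \Y$ directly, since there is no distinguished reference sheaf playing the role of $\OO_{\Delta_\Y}$; the conjugation trick is what makes the semicontinuity argument go through cleanly in the orbifold setting without any inertia-stack bookkeeping.

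A minor point: your concern about the $\Aut^0$-action on $\Pic^0$ being trivial is legitimate but orthogonal to the proof --- the representability lemma naturally yields the semidirect product $\Aut\ltimes\Pic$, and the triviality of the action on the identity component (noted separately in \S3.2) is what justifies writing $\times$ in the statement; it is not needed for the argument itself.
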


\begin{remark}
A related statement, as in \cite{rosay}, is that the group of derived autoequivalences
$\Aut (\D(\X))$ is a locally algebraic group scheme whose identity component is given by
$$\Aut^0 (\D(\X)) \simeq \Aut^0 (\X) \times \Pic^0 (\X).$$
This provides a very natural interpretation of the derived invariance of this quantity. The precise 
formula (\ref{rouquier_formula}) is however important in what follows.
Note also that at the end of the day, due to the nature of the stacks we are considering, 
the Theorem really proves the invariance of the familiar quantity 
$$\Aut^0 (X) \times \Pic^0 (X)$$
attached to the singular variety $X$.
\end{remark}

\begin{proof}
This closely follows Rouquier's strategy in the case of smooth projective varieties, with extra care to check that all intermediate 
assertions continue to hold in the case of orbifolds.
Note first that $\Phi_\E$ induces a group isomorphism
\begin{equation}\label{auto_isom}
H : \Aut (\D(\X)) \longrightarrow \Aut (\D(\Y)), \,\,\,\,\Phi_\R \to \Phi_\E \circ \Phi_\R \circ \Phi_\E^{-1}.
\end{equation}
Here $ \Phi_\E^{-1}: \D(\Y) \rightarrow \D(\X)$ can be identified with the left adjoint of $\Phi_\E$, and is 
given by the kernel $\E_L := \E\otimes p^* \omega_{\Y} [n]$, where $\omega_{\Y}$ is the orbifold canonical
bundle of $\Y$, and $n = \dim \X = \dim \Y$. (Grothendieck duality and the 
Serre functor function as in the smooth projective case;  see \cite{kawamata1} \S2 and \cite{kawamata3} \S7.)
The equivalence $\Phi_\E \circ \Phi_\R \circ \Phi_\E^{-1}$ is given in turn by the convolution kernel 
$\E * \R * \E_L \in \D (\Y \times \Y)$.\footnote{Recall that given spaces $\X, \Y , \Z$ and objects $\F \in \D(\X \times \Y)$ 
and $\G \in \D (\Y \times \Z)$ respectively, the convolution of $\F$ and $\G$ is defined as 
$\F * \G : = \RR {p_{13}}_* (p_{12}^* \F \otimes p_{23}^* \G)$.} 

Recall now that a pair $(\varphi, L) \in G_{\X}: = \Aut^0 (\X) \times \Pic^0 (\X)$ 
induces an autoequivalence of $\D(\X)$ given by the kernel $(1, \varphi)_* L$, i.e. the line bundle $L$ supported on the 
graph $\Gamma_\varphi \subset \X \times \X$. 
We see the inclusion $ G_{\X} \hookrightarrow \Aut (\D(\X))$ as a family $\F$ of kernels in $\D(\X \times \X)$ 
parametrized by $G_{\X}$. Concretely, this is obtained as follows.  We consider the embedding 
$$f : \X \times G_{\X} \rightarrow \X \times \X \times G_{\X}$$
given by $(x, \varphi, L) \mapsto (x, \varphi(x), \varphi, L)$, and define
$$\F : = f_* p_{13}^* \P \in \D \left(\X \times \X \times G_{\X} \right),$$
where $\P$ is a Poincar\'e line bundle on $\X \times \Pic^0 (\X)$. Via the correspondence above, this maps to the family of 
kernels of autoequivalences of $\D(\Y)$
$$\H := p_{12}^* \E * \F * p_{12}^* \E_L \in \D \left(\Y \times \Y \times G_{\X} \right),$$
where $p_{12}$ denotes the projection of $\X \times \Y \times G_{\X}$ onto the first two factors.
Over the origin, the fiber of $\H$ is given by 
$$\H_0 = \E * \E_L \simeq \OO_{\Delta_{\Y}}.$$
 (It is well known that this holds for any Fourier-Mukai equivalence, and continues to hold in the orbifold case by the remark above.)

The claim now is that there is an open neighborhood of the origin $U \subset G_{\X}$ such that the restriction $\H_u \in \D(\Y \times \Y)$ 
over $u \in U$ continues to be a line bundle in $\Pic^0 (\Y)$ supported on the graph of an automorphism of $\Y$. 
Indeed, we can represent the object $\H$ by a bounded complex of orbifold vector bundles, and since $\H_0$ is the structure 
sheaf of the diagonal, by general semicontinuity of rank
it follows that there exists such a neighborhood so that this complex has cohomology only in degree zero. Moreover, by possibly 
shrinking $U$, we can also assume that for every $y \in \Y$ and $u \in U$ one has that $\H_{|\{y\} \times \Y \times \{u\}}$ is the 
structure sheaf of a point.
To conclude the claim, we can then apply Lemma \ref{functor} below.

We deduce that $H (U)$ is mapped into the algebraic subgroup $\Aut^0 (\Y) \times \Pic^0 (\Y) \subset \Aut (\D(\Y))$, and given the 
fact that $U$ generates $\Aut^0 (\X) \times \Pic^0 (\X)$ as a group, by restricting $H$ we get an induced morphism of algebraic groups
$$F_\E: \Aut^0 (\X) \times \Pic^0 (\X) \longrightarrow \Aut^0 (\Y) \times \Pic^0 (\Y).$$ 
Going in the other direction, the similar morphism obtained from $\Phi_\E^{-1}$ provides an inverse for $F_\E$.
Finally, given that the isomorphism is induced by $H$, the formula in (\ref{rouquier_formula}) follows exactly as in \cite{PS} Lemma 3.1.
\end{proof}

\begin{lemma}\label{functor}
Fix a projective orbifold $\X$, and consider the functor $G$ associating to every scheme of finite type $S$ the set $G(S)$ consisting of 
isomorphism classes of objects $\F$ in $\D(\X \times \X \times S)$ satisfying the following two conditions:

\noindent
$\bullet$\,\, for every $s \in S$, the restriction $\F_s \in \D(\X \times \X)$ of $\F$ to $\X \times \X \times \{s\}$ 
induces an autoequivalence of $\D(\X)$.

\noindent
$\bullet$\,\, for every $s \in S$ and every $x \in \X$, there exists $y \in \X$ such that ${\F_s}_{|\{x\} \times \X} \simeq \OO_y$. 

We denote by $\widetilde G$ the functor obtained by modding out $G(S)$ by the action of $\Pic (S)$ obtained by tensoring with 
pullbacks of line bundles on $S$. Then $\widetilde G$ is represented by the algebraic group scheme $\Aut (\X) \ltimes \Pic (\X)$.
\end{lemma}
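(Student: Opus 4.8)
The plan is to establish the representability statement for $\widetilde{G}$ by imitating the standard argument that identifies Fourier--Mukai kernels supported on graphs of automorphisms with the group $\Aut(\X) \ltimes \Pic(\X)$, carefully transposing each step to the Deligne--Mumford setting. First I would analyze a single object $\F \in G(S)$ over an arbitrary base $S$. For each $s$, condition (i) says $\F_s$ induces an autoequivalence of $\D(\X)$, and condition (ii) says each ${\F_s}_{|\{x\}\times\X}$ is the structure sheaf of a point $y = \psi_s(x)$. The first task is to promote this pointwise data to a genuine morphism: I would use the fact that, since $\F$ is a perfect complex over $\X\times\X\times S$ whose restriction to each $\{x\}\times\X\times\{s\}$ is a skyscraper sheaf (in particular concentrated in one degree with one-dimensional fibers), cohomology-and-base-change together with the standard characterization of families of skyscrapers shows that $\F$ is, fppf-locally on $\X\times S$, the pushforward of a line bundle along the graph of a morphism $\X\times S \to \X$ over $S$. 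This produces a morphism $\psi \colon \X\times S \to \X$ over $S$; because each $\F_s$ is an equivalence, $\psi_s$ is an isomorphism for every $s$, so $\psi$ is an $S$-point of $\Aut(\X)$ (here one uses that $\Aut(\X)$ is an algebraic group, as recalled in \S3.2). The residual data is then a line bundle on the graph $\Gamma_\psi \cong \X\times S$, i.e.\ an object of $\Pic(\X\times S)$.

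Next I would assemble the functorial bijection. Having produced $\psi \in \Aut(\X)(S)$ and a line bundle $N \in \Pic(\X\times S)$ with $\F \cong (\id\times\psi\times\id_S)_* \, q^* N$ for the appropriate projection $q$, I would observe that the decomposition $\Pic(\X\times S) \cong \Pic(\X)(S) \times \Pic(S)$ is not canonical, but becomes canonical after modding out by $\Pic(S)$ — which is exactly the equivalence relation defining $\widetilde{G}$. Thus $\widetilde{G}(S)$ is identified set-theoretically with pairs $(\psi, [N \bmod \Pic(S)]) \in \Aut(\X)(S) \times \Pic(\X)(S)$. The group law: composing two kernels $\F, \F'$ by convolution over $S$ corresponds to composing the automorphisms and transporting the line bundle, which is precisely the semidirect product multiplication $(\psi', N')\cdot(\psi, N) = (\psi'\psi, \, \psi^* N' \otimes N)$, giving the identification $\widetilde{G} \simeq \Aut(\X)\ltimes \Pic(\X)$ as group functors. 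Finally, representability: $\Aut(\X)$ is representable by an algebraic group by the results cited in \S3.2, and $\Pic(\X)$ is representable by a group scheme (locally of finite type) since $\X$ is a projective Deligne--Mumford stack with projective coarse space — this is where one invokes that $\X$ has the resolution property and that $\Pic(\X)$ agrees up to finite data with $\Pic(X)$, which is representable. Hence the semidirect product is representable by an algebraic group scheme.

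The main obstacle I anticipate is the first step: rigorously showing that the pointwise conditions (i) and (ii) force $\F$ to be, locally on $\X\times S$, the pushforward of a line bundle along a graph. In the scheme case this rests on the statement that a perfect complex on $Y\times S$ all of whose fibers over points of $S$ are structure sheaves of points is fppf-locally a graph-pushforward of a line bundle; over a Deligne--Mumford stack one must check that the relevant semicontinuity and cohomology-and-base-change machinery is available (it is, for quotient stacks with the resolution property — this is implicit in the Grothendieck duality remarks of \S3.3 and the references to \cite{kawamata3} \S7), and that the resulting morphism $\X\times S \to \X$ is a morphism of stacks, not merely of coarse spaces. A secondary technical point, needed to even state that $\Gamma_\psi$ carries a line bundle in the honest sense, is that for $\X$ without pseudo-reflections the graph of a stack automorphism is again a smooth closed substack isomorphic to $\X$; this follows from $\Aut(\X)\simeq\Aut(X)$ and the fact that $\X$ is determined by $X$. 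Once these foundational points are in place, the group-law computation and the representability conclusion are formal and proceed exactly as in the smooth projective case treated by Rouquier and in \cite{PS}.
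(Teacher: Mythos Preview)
Your proposal is correct and follows essentially the same approach as the paper: show that the fiberwise skyscraper condition forces $\F$ to be a sheaf supported on the graph of a morphism $\X\times S\to \X$, use the autoequivalence hypothesis to upgrade this to an automorphism, and read off the residual line bundle on $\X\times S$ modulo $\Pic(S)$. The paper's version is terser --- it simply cites the analogues of \cite{huybrechts} Lemma~3.31 and Corollary~5.3 for the sheaf-on-a-graph step and does not spell out the group law or the representability of $\Pic(\X)$ --- while you are more explicit about the semidirect product structure and more candid about the foundational inputs (cohomology-and-base-change for quotient stacks, the identification $\Aut(\X)\simeq\Aut(X)$), but there is no substantive difference in strategy.
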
 
\begin{proof}
Fix $s \in S$. Since for every $x \in \X$ we have that $\L i_x^* \F_s$ is a sheaf, where $i_x$ is the inclusion
of $\{x\} \times \X$ in $\X \times \X$, by analogy with 
\cite{huybrechts} Lemma 3.31 one sees that $\F_s$ is an (orbifold) sheaf, flat over $\X$ via the first projection.
A similar reasoning then shows that $\F$ itself is a sheaf (flat over $S$).
For any $x \in \X$,  writing $f_s (x) = y$ as in the hypothesis defines a mapping $f_s : \X \rightarrow \X$. As the 
restriction of $\F$ to any point $(x,y)$ on the graph of this map is $\OO_{(x,y)}$, using the local sections of the 
orbifold sheaf $\F$ around every such point we obtain that $f_s$ is in fact 
 an orbifold morphism. 
 
Now completely analogously to the proof of \cite{huybrechts} Corollary 5.3 in the case of smooth projective varieties, 
given that $\F_s$ induces an autoequivalence, one concludes that $f_s$ is an automorphism and $\F_s$ is isomorphic 
to a line bundle supported on the graph of $f_s$ 
(which can  be thought of as a line bundle on $\X$).
 As the $\F_s$ form a flat family over $S$, we obtain an induced morphism 
 $f: S \rightarrow \Aut (\X)$, and it follows that $\F = \Psi_* \mathcal{L}$, where 
$$\Psi: \X \times S \longrightarrow \X \times \X \times S, \,\,\,\,(x, s) \mapsto(x, f_s(x), s),$$
 and with $\mathcal{L}$ an orbifold line bundle on $\X \times S$, rigidified after modding out by 
 pullbacks from $S$.
This produces a canonical element in ${\rm Hom} (S, \Aut (\X) \ltimes \Pic (\X))$.
\end{proof}

\section{The Picard variety}

The following is the orbifold analogue of the main result of \cite{PS}.

\begin{theorem}\label{isogeny}
Let $X$ and $Y$ be normal projective varieties with quotient singularities, and $\X$ and $\Y$ the associated orbifolds. 
If $\D (\X) \simeq \D(\Y)$, then ${\rm Pic}^0 (\X)$ and ${\rm Pic}^0 (\Y)$ are isogenous.
In particular $h^{0,1}_{{\rm orb}} (\X) = h^{0,1}_{{\rm orb}} (\Y)$ and $h^0 (\X , T_{\X}) = h^0 (\Y , T_{\Y})$.
\end{theorem}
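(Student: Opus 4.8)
The plan is to follow exactly the argument of \cite{PS}, transplanted to the orbifold setting now that all the needed technical ingredients are in place. The strategy is to exploit the isomorphism of algebraic groups
$$F_\E : \Aut^0(\X) \times \Pic^0(\X) \longrightarrow \Aut^0(\Y) \times \Pic^0(\Y)$$
provided by \theoremref{rouquier}, together with the fact (recorded in \parref{} above) that $\Aut^0$ acts trivially on $\Pic^0$ and that, by Fujiki's theorem \eqref{matsumura}, the Albanese map $\Alb(\Aut^0(\X)) \to \Alb(\X)$ has finite kernel. First I would invoke Chevalley's theorem to split $\Aut^0(\X)$ (and likewise $\Aut^0(\Y)$) as an extension of an abelian variety $\Alb(\Aut^0(\X))$ by its maximal affine connected subgroup $\Aff(\Aut^0(\X))$. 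The key point is that $F_\E$ need not respect this extension structure, so one cannot simply read off an isomorphism of Picard varieties; instead one extracts an isogeny by passing to the ``abelian-variety parts''.

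Concretely, the next step is to observe that the maximal connected affine subgroup of $\Aut^0(\X) \times \Pic^0(\X)$ is $\Aff(\Aut^0(\X)) \times \{0\}$ (since $\Pic^0(\X)$ is an abelian variety), and that $F_\E$ must carry maximal affine subgroup to maximal affine subgroup, hence induces an isomorphism $\Aff(\Aut^0(\X)) \simeq \Aff(\Aut^0(\Y))$. Passing to the quotients, $F_\E$ then descends to an isomorphism of abelian varieties
$$\Alb\big(\Aut^0(\X)\big) \times \Pic^0(\X) \;\simeq\; \Alb\big(\Aut^0(\Y)\big) \times \Pic^0(\Y).$$
Now I would bring in the Matsumura--Nishi--Fujiki input: the first factor on each side maps with finite kernel to $\Alb(\X)$, respectively $\Alb(\Y)$, and by \lemmaref{orbifold_picard} together with the duality between $\Alb$ and $\Pic^0$ one has $\Alb(\X) \sim \dual{\Pic^0(\X)}$ and similarly for $\Y$. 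Comparing dimensions in the displayed isomorphism and using that $\dim \Alb(\Aut^0(\X)) \le \dim \Alb(\X) = \dim \Pic^0(\X)$ forces the ``abelian part of the automorphism group'' to contribute in a controlled way; chasing this through yields an isogeny $\Pic^0(\X) \sim \Pic^0(\Y)$. The two numerical consequences are then immediate: isogenous abelian varieties have equal dimension, giving $h^{0,1}_{\rm orb}(\X) = \dim\Pic^0(\X) = \dim\Pic^0(\Y) = h^{0,1}_{\rm orb}(\Y)$ via \lemmaref{orbifold_picard}, and $F_\E$ restricted to the affine parts gives an isomorphism of Lie algebras, so $h^0(\X, T_\X) = \dim\Aut^0(\X) = \dim\Aut^0(\Y) = h^0(\Y, T_\Y)$.

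The main obstacle I anticipate is not the group theory, which is formal once the inputs are granted, but rather making sure every ingredient genuinely survives the passage from smooth projective varieties to orbifolds: that $\Aut^0(\X)$ is an algebraic group with the expected tangent space $H^0(\X, T_\X)$, that Chevalley's structure theorem and Fujiki's analogue of Matsumura--Nishi apply to it, and that the action of $\Aut^0(\X)$ on $\Pic^0(\X)$ is trivial. All of these have been arranged in \parref{} above (citing \cite{fujiki}, \cite{FMN}, \cite{brion}, \cite{PS}), so the remaining work is genuinely just to run the \cite{PS} argument verbatim. I would therefore keep the proof short, quoting \cite{PS} for the group-theoretic manipulation and emphasizing only the points where the orbifold bookkeeping (the identifications $\Aut(\X) \simeq \Aut(X)$, $\Pic^0(\X) \simeq \Pic^0(X)$, $\Alb(\X) \simeq \Alb(X)$) is what makes everything match up.
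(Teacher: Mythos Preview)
Your proposal has a genuine gap at the step you gloss as ``chasing this through yields an isogeny $\Pic^0(\X) \sim \Pic^0(\Y)$.'' The purely group-theoretic constraints you have assembled --- the isomorphism of abelian varieties $\Alb(\Aut^0(\X)) \times \Pic^0(\X) \simeq \Alb(\Aut^0(\Y)) \times \Pic^0(\Y)$ together with the Fujiki--Matsumura--Nishi fact that each $\Alb(\Aut^0)$ embeds up to isogeny into the corresponding $\Pic^0$ --- do not suffice. For a counterexample at the level of abelian varieties, take non-isogenous elliptic curves $E_1, E_2$ and set $A_1 = E_1$, $P_1 = E_1 \times E_2$, $A_2 = 0$, $P_2 = E_1^2 \times E_2$: then $A_1 \times P_1 \simeq A_2 \times P_2$ and each $A_i$ is isogenous to an abelian subvariety of $P_i$, yet $P_1 \not\sim P_2$. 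No amount of dimension-counting or Poincar\'e complete reducibility will extract the isogeny from these hypotheses alone. (A related minor slip: matching the affine parts only gives $\dim \Aff(\Aut^0(\X)) = \dim \Aff(\Aut^0(\Y))$, not $\dim \Aut^0(\X) = \dim \Aut^0(\Y)$; the latter needs the equality of $\dim \Pic^0$ first.)

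The argument in \cite{PS}, which the paper transplants to the orbifold setting, is \emph{not} the formal group-theoretic manipulation you recall: it uses the explicit formula \eqref{rouquier_formula} in an essential way. One studies the projection $\pi\colon \Pic^0(\X) \to \Aut^0(\Y)$, $L \mapsto p_1(F_{\E}(\id, L))$, sets $A = {\rm Im}\,\pi$, picks a point $(x,y)$ in the support of the kernel $\E$, and considers the orbit map $f\colon A \to \Y$, $a \mapsto \psi_a(y)$. Pulling $\E$ back along $f$ and applying \eqref{rouquier_formula} shows that for $L \in {\rm Ker}\,\pi$ with $F_{\E}(\id,L) = (\id, M)$ the line bundle $f^*M$ is torsion of uniformly bounded order. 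This geometric input is precisely what links ${\rm Ker}\,\pi$ to the kernel of $g^*\colon \Pic^0(\Y) \to \Pic^0(A)$ and makes the dimension inequality (and then the isogeny, via the symmetric construction) go through; your sketch omits it entirely.
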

\begin{proof}
We start in fact with the proof of the numerical equalities. The second follows from the first and Theorem \ref{rouquier}.
We show below that $h^{0,1}_{{\rm orb}} (\X)  \le h^{0,1}_{{\rm orb}} (\Y)$, the other inequality following by 
symmetry.

Let $\EE \in \D (\X \times \Y)$ be the kernel defining the equivalence, and let
$$F = F_\E: \Aut^0 (\X) \times \Pic^0 (\X) \longrightarrow \Aut^0 (\Y) \times \Pic^0 (\Y)$$ 
be the isomorphism of algebraic groups given by Theorem \ref{rouquier}. To prove the
assertion, we study the map 
$$\pi \colon \Pic^0( \X) \to \Aut^0 (\Y), \quad \pi(L) =  p_1 \bigl( F(\id, L) \bigr).$$ 
Note that if $ \psi = \pi(L)$, so that $F(\id, L) = (\psi, M)$, then by Theorem \ref{rouquier} we have
\begin{equation} \label{eq:formula-shE}
	p_1^{\ast} L \otimes \EE \simeq 
		p_2^{\ast} M \otimes (\id \times \psi)_{\ast} \EE. 
\end{equation}

The abelian variety $A = {\rm Im}~\pi$ naturally acts on $\Y$ by automorphisms. If $\dim A = 0$, 
i.e. $\pi (L) = \id$ for all $L \in \Pic^0 (\X)$, then $F$ restricted to $\{\id\} \times \Pic^0 (\X)$ induces 
an embedding of $\Pic^0(\X)$ into $\Pic^0(\Y)$, and therefore 
$h^{0,1}_{{\rm orb}} (\X)  \le h^{0,1}_{{\rm orb}} (\Y)$ by Lemma \ref{orbifold_picard}.

We can thus assume that $\dim A > 0$. We have a commutative diagram
$$\xymatrix{
\Pic^0 (\X)\ar[dr]_{\hspace{4mm} \pi}  \ar[r] & A \ar[r]^{ g \hspace{4mm}}  \ar[d]^{i_A} & \Alb (\Y) \\
& \Aut^0 (\Y) \ar[r]^{p \hspace{4mm}}  & \Alb \big(\Aut^0 (\Y) \big)\ar[u]}
$$
which defines the morphism of abelian varieties $g$, where $p$ is the projection map in Chevalley's theorem.
Given that the kernel of $p$ is affine, the composition $p \circ i_A$ has finite kernel. On the other hand, the right 
vertical map is given by (\ref{matsumura}), so again has finite kernel by the Fujiki-Matsumura-Nishi Theorem. 
Consequently $g: A \to \Alb(\Y)$ has finite kernel as well, hence the dual map $g^*: \Pic^0(\Y) \to \Pic^0(A)$ is surjective.

Now take a point $(x,y)$ in the support of $\EE$, and consider the orbit map
$$	f \colon A \to  \Y \simeq \{x\}\times \Y, \quad a \mapsto (x, \psi_a (y)), $$
where $\psi_a$ denotes the automorphism of $\Y$ corresponding to $a$.
Let $\FF = \L f^* (i_x \times \id_{\Y})^{\ast} \EE \in \D (A)$, where $i_x: \{x\} \rightarrow \X$ is the inclusion; it is nontrivial by
our choice of $(x,y)$.

For $a \in A$, let $t_a \in \Aut^0(A)$ denote translation by
$a$. The identity in \eqref{eq:formula-shE} implies that
\[
	t_a^{\ast} \FF \simeq f^* M \otimes \FF,
\]
whenever $L \in \Pic^0(\X)$ is such that $F(\id, L) = (\psi_a, M)$. 
(Note that this means that the cohomology sheaves of $\F$ are semihomogeneous vector bundles on $A$.)
In particular consider $L \in {\rm Ker}~\pi$, so that $a$ is the identity of $A$ and correspondingly
$$\FF \simeq  f^*M \otimes  \FF.$$
At least one of the cohomology sheaves of $\FF$ is nontrivial and therefore has positive rank $r$; 
by passing to determinants we have $f^*M^{\otimes r} \simeq \OO_A$. 
We conclude that
$$F( r\cdot {\rm Ker} ~\pi) \subseteq  {\rm Ker} ~ g^*.$$
Finally, since $F$ is an isomorphism, combined with Lemma \ref{birat_hodge}  and recalling that $g^*$ is 
surjective, this implies
$$h^{0,1}_{{\rm orb}} (\X) - \dim A = \dim({\rm Ker}~ \pi) \leq \dim({\rm Ker} ~g^*) = h^{0,1}_{{\rm orb}} (\Y) - h^{0,1} (A),$$
and therefore $h^{0,1}_{{\rm orb}} (\X)  \le h^{0,1}_{{\rm orb}} (\Y)$. 

This concludes the proof of the fact that $\Pic^0 (\X)$ and $\Pic^0 (\Y)$ have the same dimension. 
We now use this to show that they are in fact isogenous. Consider the construction symmetric to $A$ above, namely the abelian variety $B = {\rm Im}~ \nu$, with
$$\nu \colon \Pic^0(\Y) \to \Aut^0 (\X) , \quad \nu(M) =  p_1 \bigl( F^{-1}(\id, M) \bigr).$$ 
We first claim that ${\rm Ker }~ \pi \simeq {\rm Ker}~\nu$. Indeed, ${\rm Ker }~ \pi$ consists of those pairs 
$(\id, L)$ such that $F(\id, L) = (\id, M)$ for some $M \in \Pic^0 (\Y)$. Since the definition of ${\rm Ker }~ \nu$ 
is similar in the reverse direction, and $F$ is an isomorphism, the assertion follows. Since 
$h^{0,1}_{{\rm orb}} (\X)  = h^{0,1}_{{\rm orb}} (\Y)$, this gives in particular 
$$\dim A = \dim B \quad \text{and} \quad \dim {\rm Ker} \nu = \dim {\rm Ker} f^*.$$

Recall now that for any $L$ and $M$ such that $F(\id, L) = (\id, M)$ we have that $f^* M$ is a torsion line bundle 
in $\Pic^0 (A)$, and in fact for all such $M$ there exists a fixed $r>0$ such that $f^* M^{\otimes r} \simeq \OO_A$.
Hence via multiplication by $r$ on $\Pic^0(\Y)$, ${\rm Ker }~ \nu$ is mapped onto ${\rm Ker} ~f^*$, so  
$B$ is isogenous to $\Pic^0 (A)$, hence to $A$. Finally, since we have extensions
$$0\rightarrow {\rm Ker }~ \pi \rightarrow \Pic^0(\X) \rightarrow A\rightarrow 0 {\rm~~and~~}
0\rightarrow {\rm Ker }~ \nu \rightarrow \Pic^0(\Y) \rightarrow B\rightarrow 0,$$ 
it follows that $\Pic^0 (\X)$ and $\Pic^0 (\Y)$ are isogenous.
\end{proof}

\section{Further remarks and problems}

\noindent
{\bf A note on the McKay correspondence.}
As emphasized in the Introduction, the main result of this paper provides a proof of Corollary \ref{Gorenstein} based on general methods, in particular not relying on the existence of crepant resolutions and the McKay correspondence. 
It is important to note however that the McKay correspondence does provide another proof, where the methods 
apply only to the particular case of Gorenstein quotient singularities of dimension (up to) three.

Indeed, it is known to begin with that threefolds with Gorenstein quotient singularities admit crepant resolutions \cite{roan}. In fact, a canonical such 
resolution is provided by Nakamura's $G$-Hilbert scheme,  \cite{BKR} Theorem 1.2.
Denote by $X^\prime$ and $Y^\prime$ such particular crepant resolutions of $X$ and $Y$. 
Following \cite{BKR} in the global quotient case, one has  a derived version of the McKay correspondence, namely 
$\D (\X) \simeq \D(X^\prime)$ and $\D(\Y) \simeq \D(Y^\prime)$.\footnote{I thank Y. Kawamata for pointing out to me that this holds in 
the general case as well. Indeed, the proof is based on checking an equivalence criterion on a spanning class in $\D (X^\prime)$; see \cite{BKR} 
\S2.6. As these spanning classes are canonically provided by the $G$-Hilbert scheme structure, the local ones glue to give natural spanning classes
in the general case.} 
From the induced equivalence $\D(X^\prime) \simeq \D(Y^\prime)$ of derived categories of smooth projective threefolds, we obtain 
according to \cite{PS} that
$$h^{p,q} (X^\prime) =  h^{p,q} (Y^\prime) \,\,\,\,{\rm for~all~} p, q.$$
On the other hand, in the presence of a crepant resolution, one has
$$h^{p,q}_{\rm orb} (\X) =  h^{p,q} (X^\prime) \,\,\,\,{\rm for~all~} p, q,$$
and similarly for $Y$, as follows from the cohomological McKay-Ruan correspondence proved in \cite{LP} and 
\cite{yasuda1}.\footnote{In the latter, Yasuda also shows that $h^{p,q}_{\rm orb} (\X)$ are equal to Batyrev's stringy 
Hodge numbers of $X$, while in \cite{yasuda2} he extends the cohomological McKay-Ruan correspondence to an
appropriate statement in the non-Gorenstein case.}

\noindent
{\bf Invariance of other orbifold Hodge numbers.}
Due to constraints on degree shifting numbers and the size of the fixed loci, the following analogue of a well-known result in the smooth case works even in the non-Gorenstein case.

\begin{corollary}\label{high}
Let $X$ and $Y$ be normal projective varieties with quotient singularities, of dimension $n$, and let $\X$ and $\Y$ be the associated smooth Deligne-Mumford stacks. Assume that $\D(\X) \simeq \D(\Y)$. Then 
$$h^{n,0}_{\rm orb} (\X) =  h^{n,0}_{\rm orb} (\Y) \,\,\,\, {\rm and} \,\,\,\, h^{n-1,0}_{\rm orb} (\X) =  h^{n-1,0}_{\rm orb} (\Y).$$
\end{corollary}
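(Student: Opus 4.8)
The plan is to pit the numerical information of Corollary \ref{columns} for the two extreme columns $i = n$ and $i = n-1$ against the structure of the inertia stack, using crucially that $\X$ and $\Y$ have no pseudo-reflections.

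First I would isolate the relevant support information. Recall that for every component $\Z \subset \I\X$ the shift number $a(\Z)$ is a nonnegative rational, equal to $0$ exactly for the untwisted sector $\Z = \X$ (with coarse space $Z = X$); and that, since $\X$ has no pseudo-reflections, every other component satisfies $\dim Z \le n-2$. By \eqref{orbifold_hodge_def}, a component $\Z$ can contribute to $h^{p,q}_{\rm orb}(\X)$ only when $0 \le p - a(\Z) \le \dim Z$ and $0 \le q - a(\Z) \le \dim Z$. Two consequences follow: for every $p$, the condition $q = 0$ already forces $a(\Z) = 0$, hence $h^{p,0}_{\rm orb}(\X) = h^{p,0}(X)$; and a component contributing to $h^{n,1}_{\rm orb}(\X)$ that is not untwisted would need $a(\Z) \le 1$ (from the $q$-index) and $a(\Z) \ge n - \dim Z \ge 2$ (from the $p$-index), which is impossible, so $h^{n,1}_{\rm orb}(\X) = h^{n,1}(X)$. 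The same holds for $\Y$.

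Next I would run the two column computations. If a component $\Z$ contributes to some $H^{p,q}_{\rm orb}(\X)$ with $p - q = i$, then $i = (p - a(\Z)) - (q - a(\Z)) \le \dim Z$. For $i = n$ this forces $\dim Z = n$, i.e. $\Z = \X$, and then $q = a(\Z) = 0$; so $\bigoplus_{p-q=n} H^{p,q}_{\rm orb}(\X) = H^{n,0}(X)$, and Corollary \ref{columns} gives $h^{n,0}_{\rm orb}(\X) = h^{n,0}(X) = h^{n,0}(Y) = h^{n,0}_{\rm orb}(\Y)$. For $i = n-1$ it forces $\dim Z \ge n-1$; since nontrivial sectors have $\dim Z \le n-2$, only $\Z = \X$ survives, and then the $p$-index bound $p \le n$ leaves $q \in \{0,1\}$, so $\bigoplus_{p-q=n-1} H^{p,q}_{\rm orb}(\X) = H^{n-1,0}(X) \oplus H^{n,1}(X)$ and Corollary \ref{columns} yields
$$h^{n-1,0}(X) + h^{n,1}(X) = h^{n-1,0}(Y) + h^{n,1}(Y).$$

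Finally I would collapse this identity to a single Hodge number. Combining the support observations above with Serre duality for orbifold Hodge numbers, $h^{p,q}_{\rm orb}(\X) = h^{n-p,n-q}_{\rm orb}(\X)$ (valid also in the non-Gorenstein case, as recalled before Corollary \ref{Gorenstein}), and with the Hodge symmetry $h^{p,q}_{\rm orb}(\X) = h^{q,p}_{\rm orb}(\X)$ of the pure Hodge structure $H^\bullet_{\rm orb}(\X,\QQ)$, one obtains $h^{n,1}(X) = h^{n,1}_{\rm orb}(\X) = h^{0,n-1}_{\rm orb}(\X) = h^{n-1,0}_{\rm orb}(\X) = h^{n-1,0}(X)$, and likewise $h^{n,1}(Y) = h^{n-1,0}(Y)$. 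Substituting into the displayed equality gives $2\,h^{n-1,0}(X) = 2\,h^{n-1,0}(Y)$, hence $h^{n-1,0}_{\rm orb}(\X) = h^{n-1,0}(X) = h^{n-1,0}(Y) = h^{n-1,0}_{\rm orb}(\Y)$. The main obstacle is not any single hard step but the bookkeeping: correctly tracking which sectors of $\I\X$ can meet a prescribed orbifold bidegree — here the no-pseudo-reflections hypothesis (forcing $\dim Z \le n-2$ on twisted sectors) and the nonnegativity of shift numbers do all the work — together with making sure Serre duality for orbifold Hodge numbers is genuinely available in the possibly non-Gorenstein setting.
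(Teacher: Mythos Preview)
Your proof is correct and follows essentially the same route as the paper: both arguments isolate the columns $i=n$ and $i=n-1$ in Corollary~\ref{columns}, use the no-pseudo-reflections hypothesis to force $\dim Z \le n-2$ on twisted sectors (so that these sectors cannot contribute to a column with $|p-q| \ge n-1$), and then invoke duality to reduce $h^{n-1,0}_{\rm orb} + h^{n,1}_{\rm orb}$ to $2\,h^{n-1,0}_{\rm orb}$. Your treatment is slightly more explicit in two places --- you separately verify $h^{n,1}_{\rm orb}(\X) = h^{n,1}(X)$ and you unpack the ``duality'' step into Serre duality plus Hodge symmetry --- but these are elaborations of the same argument rather than a different method.
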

\begin{proof}
We have seen in Theorem \ref{general}(i) that
$$\underset{p-q= i}{\sum} h^{p,q}_{\rm orb}(\X) = \underset{p-q= i}{\sum} h^{p,q}_{\rm orb} (\Y)$$
for all $- n\le i \le n$, while in the proof of Lemma \ref{birat_hodge} it was pointed out that when $p$ or $q$ are $0$, the 
only contribution to $h^{p,q}_{\rm orb}(\X)$ can come from the untwisted sector, when $a(\Y) = 0$. 
By taking $i =n$, this immediately gives the statement for $h^{n,0}_{{\rm orb}}$.

Take now $i = n-1$. In this case, the contributions on say the left hand side are of the form 
$$\underset{p-q= n-1}{\sum} h^{p,q}_{\rm orb}(\X) = h^{n,1}_{\rm orb} (\X) + h^{n-1,0}_{\rm orb} (\X) +  \sum_{p-q = n-1} \sum_{\Z} h^{p - a(\Z), q - a(\Z)} (Z),$$ 
where the rightmost sum is taken over components $\Z \subset \I \X$  such that 
$a (\Z) > 0 $, and $p - q = n-1$. But note that each $Z$ corresponds to a nontrivial fixed locus, hence by 
hypothesis we have $\dim Z \le n-2$. This means that there are in fact no nontrivial contributions coming from this sum, and 
since $h^{n,1}_{\rm orb} (\X) =  h^{n-1,0}_{\rm orb} (\X)$ by duality, we obtain the derived invariance of  $h^{n-1,0}_{\rm orb} (\X) ( = h^{n-1, 0} (X))$.
\end{proof}

\noindent
{\bf Further questions.}
Due to comparison theorems of Yasuda \cite{yasuda1}, \cite{yasuda2}, the orbifold Hodge numbers 
in the previous statements are the same as the \emph{stringy} Hodge numbers of $h^{p,q}_{{\rm st}}(X)$ introduced by Batyrev. (Here I am glossing over the fact that correctly one should rather speak of the stringy $E$-function.)
One can hope for a more general picture extending beyond the case of varieties with quotient singularities, in which case only the notion of stringy Hodge numbers continues to make sense. Given any $\QQ$-Gorenstein log-terminal variety, Kawamata \cite{kawamata4}  has proposed the existence of a unique category $\D(X)$ associated to $X$ which is natural from the point of view of the minimal model program, 
behaves like the derived category of a smooth projective variety, and satisfies
$$f^* {\rm Perf}(X) \subset \D(X ) \subset \D(\widetilde X)$$
for any resolution of singularities $f: \widetilde X \rightarrow X$. He showed the existence of this category in some special case, and argued that
in the case when $X$ has quotient singularities it is given by the orbifold derived category $\D (\X)$. The following question is unfortunately not completely well-posed at the moment, but worth asking in view of Kawamata's program.

\begin{question}
Let $X$ and $Y$ be $\QQ$-Gorenstein log-terminal projective varieties such that $\D(X) \simeq \D(Y)$. Does the equality of 
stringy Hodge numbers 
$$h^{p,q}_{\rm st} (X) =  h^{p,q}_{\rm st} (Y)$$
hold for each $p$ and $q$ whenever all the quantities in the statement are well-defined?
\end{question}

Another natural question, this time back in the realm of quotient singularities, stems from the fact that in the non-Gorenstein case 
the isomorphism in Theorem \ref{general}(i) involves 
orbifold Dolbeaut cohomology spaces $H^{p,q}_{{\rm orb}} (\X)$ with $p, q \not\in \ZZ$. 
I do not know at this stage whether there is a more refined way of matching these with special subspaces on the other side, 
thus providing more relations in the orbifold Hodge numbers than the naive columns of the Hodge diamond, but it would 
of course be extremely useful if such a construction could be found; for instance, as T. Yasuda suggests, it is possible that the 
Frobenius action on the $\ell$-adic version of orbifold cohomology as in \cite{rose} might help in this direction.

\section*{References}

\begin{biblist}
\bib{baranovsky}{article}{
	author={Baranovsky, Vladimir},
	title={Orbifold cohomology as period cyclic homology},
	journal={ Int. J. Math.},
	volume={13},
	date={2003},	
	number={8},
	pages={791--812},
}
\bib{BG}{book}{
   author={Boyer, Charles},
   author={Galicki, K.},
   title={Sasakian geometry},
   publisher={Oxford University Press},
   date={2008},
}
\bib{BKR}{article}{
	author={Bridgeland, Tom},
	author={King, Alastair},
	author={Reid, Miles},
	title={The McKay correspondence as an equivalence of derived categories},
	journal={J. Amer. Math. Soc.},
	volume={14},
	date={2001},	
	number={3},
	pages={535--554},
}
\bib{brion}{article}{
	author={Brion, Michel},
	title={Some basic results on actions of non-affine algebraic groups},
       conference={
      title={Symmetry and Spaces: in honor of Gerry Schwarz},
   },
   book={
      series={Progr.Math.},
      volume={278},
      publisher={Birkhauser},
   },
   date={2009},
}
\bib{caldararu1}{article}{
	author={C\u ald\u araru, Andrei},
	title={The Mukai pairing, I: The Hochschild structure},
	date={2003},
	eprint={arXiv:math/0308079},
}
\bib{caldararu2}{article}{
	author={ C\u ald\u araru, Andrei},
	title={Derived categories of sheaves: a skimming},
       conference={
      title={Snowbird lectures in algebraic geometry},
   },
   book={
      series={Contemp. Math.},
      volume={388},
      publisher={Amer. Math. Soc.},
      place={Providence, RI},
   },
   date={2005},
   pages={43--75},
}
\bib{CR}{article}{
	author={Chen, W.},
	author={Ruan, Yongbin},
	title={A new cohomology theory of orbifold},
	journal={Comm. Math. Phys.},
	volume={248},
	date={2004},	
	pages={1--31},
}
\bib{dolgachev}{book}{
   author={Dolgachev, Igor},
   title={Lectures on invariant theory},
   series={London Mathematical Society Lecture Notes Series},
   volume={296},
   publisher={Cambridge University Press},
   date={2003},
}
\bib{edidin}{article}{
	author={Edidin, Dan},
	title={Riemann-Roch for Deligne-Mumford stacks},
	journal={Clay Math. Proceedings volume in honor of Joe Harris, to appear.},
}
\bib{FG}{article}{
	author={Fantechi, Barbara},
	author={G\"ottsche, Lothar}
	title={Orbifold cohomology for global quotients},
	journal={Duke Math. J.},
	volume={117},
	date={2003},	
	number={2},
	pages={197--227},
}
\bib{FMN}{article}{
	author={Fantechi, Barbara},
	author={Mann, Etienne},
	author={Nironi, Fabio},
	title={Smooth toric Deligne-Mumford stacks},
	journal={J. Reine Angew. Math.},
	volume={648},
	date={2010},	
	pages={201--244},
}
\bib{fujiki}{article}{
	author={Fujiki, A.},
	title={On automorphism groups of compact K\"ahler manifolds},
	journal={Invent. Math.},
	volume={44},
	date={1978},	
        pages={225--258},
}
\bib{ganter}{article}{
	author={Ganter, Nora},
	title={Inner products of $2$-representations},
	date={2011},	
        eprint={arXiv:1110.1711}
}
\bib{huybrechts}{book}{
	author={Huybrechts, Daniel},
	title={Fourier-Mukai transforms in algebraic geometry},
	date={2006},	
        publisher={Oxford University Press}
}
\bib{kawamata1}{article}{
	author={Kawamata, Yujiro},
	title={Francia's flip and derived categories},
	conference={
	title={Algebraic Geometry},
        },
        book={
      publisher={de Gruyter},
      place={Berlin},
   },
        date={2002},	
	pages={197--215},
}
\bib{kawamata2}{article}{
	author={Kawamata, Yujiro},
	title={$D$-equivalence and $K$-equivalence},
	journal={J. Diff. Geom.},
	volume={61},
	date={2002},	
	pages={147--171},
}
\bib{kawamata3}{article}{
	author={Kawamata, Yujiro},
	title={Equivalences of derived categories of sheaves on smooth stacks},
	journal={Amer. J. Math.},
	volume={126},
	date={2004},	
	number={5},
	pages={1057--1083},
}
\bib{kawamata4}{article}{
	author={Kawamata, Yujiro},
	title={Derived categories and birational geometry},
	conference={
	title={Algebraic Geometry--Seattle 2005. Part 2},
        },
        book={
        series={Proc. Sympos. Pure Math.},
      volume={80}
      publisher={Amer. Math. Soc.},
      place={Providence, RI},
   },
        date={2009},	
	pages={655--665},
}
\bib{LP}{article}{
	author={Lupercio, Ernesto},
	author={Poddar, Mainak},
	title={The global McKay-Ruan correspondence via motivic integration},
	journal={Bull. London Math. Soc.},
	volume={36},
	date={2004},	
	number={4},
	pages={509--515},
}
\bib{matsumura}{article}{
	author={Matsumura, Hideyuki},
	title={On algebraic groups of birational transformations},
	journal={Atti della Accademia Nazionale dei Lincei. Rendiconti. Classe di Scienze Fisiche, Matematiche e Naturali. 
	Serie VIII},
	volume={34},
	date={1963},	
	pages={151--155},
}
\bib{MP}{article}{
	author={Moerdijk, Ieke},
	author={Pronk, D. A.}
	title={Orbifolds, sheaves and groupoids},
	journal={K-Theory},
	volume={12},
	date={1997},	
	number={1},
	pages={3--21},
}
\bib{nakagawa}{article}{
	author={Nakagawa, Y.}
	title={Bando-Calabi-Futaki characters of K\"ahler orbifolds},
	journal={Math. Ann.},
	volume={314},
	date={1999},	
	pages={369--380},
}
\bib{orlov}{article}{
	author={Orlov, Dmitri},
	title={Derived categories of coherent sheaves and equivalences between them},
	journal={Russian Math. Surveys},
	volume={58},
	date={2003},	
	number={3},
	pages={511--591},
}
\bib{PS}{article}{
	author={Popa, Mihnea},
	author={Schnell, Christian},
	title={Derived invariance of the number of holomorphic $1$-forms and vector fields},
	journal={Ann. Sci. ENS},
	volume={44},
	date={2011},	
	number={3},
	pages={527--536},
}
\bib{roan}{article}{
	author={Roan, S.-S.},
	title={Minimal resolutions of Gorenstein orbifolds in dimension three},
	journal={Topology},
	volume={35},
	date={1996},	
	number={2},
	pages={489--508},
}
\bib{rosay}{article}{
	author={Rosay, Fabrice},
	title={Some remarks on the group of derived autoequivalences},
	date={2009},	
        eprint={arXiv:0907.3880}
}
\bib{rose}{article}{
author={Rose, Michael},
	title={Frobenius action on $\ell$-adic Chen-Ruan cohomology},
	journal={Comm. Number Theory Phys.},
	volume={1},
	date={2007},
	number={3}	
	pages={513--537},
}
\bib{rouquier}{article}{
	author={Rouquier, Raphael},
	title={Automorphismes, graduations et cat\'egories triangul\'ees},
	journal={J. Inst. Math. Jussieu},
	volume={10},
	date={2011},	
	pages={713--751},
}
\bib{steenbrink}{article}{
	author={Steenbrink, J. H. M.}
	title={Mixed Hodge structure on the vanishing cohomology},
	conference={
	title={Real and complex singularities, Olso, 1976},
        },
        book={      
      publisher={Sijthoff--Noordhoff},
      place={Alphen aan den Rijn},
   },
        date={1977},	
	pages={565--678},
}
\bib{toen}{article}{
	author={T\"oen, Bertrand},
	title={Th\'eor\`emes de Riemann-Roch pour les champs de Deligne-Mumford},
	journal={K-Theory},
	volume={18},
	date={1999},	
	number={1},
	pages={33--76},
}
\bib{totaro}{article}{
	author={Totaro, Burt},
	title={The resolution property for schemes and stacks},
	journal={J. Reine Angew. Math.},
	volume={577},
	date={2004},	
	pages={1--22},
}
\bib{yasuda1}{article}{
	author={Yasuda, Takehiko},
	title={Twisted jets, motivic measures and orbifold cohomology},
	journal={Compos. Math.},
	volume={140},
	date={2004},	
	number={2},
	pages={396--422},
}

\bib{yasuda2}{article}{
	author={Yasuda, Takehikp},
	title={Motivic integration over Deligne-Mumford stacks},
	journal={Adv. Math.},
	volume={207},
	date={2006},	
	number={2},
	pages={707--761},
}

\end{biblist}

\end{document}